\newtheorem{theorem}{Theorem}[section]
\newtheorem{lemma}[theorem]{Lemma}
\newtheorem{proposition}[theorem]{Proposition}
\newtheorem{definition}[theorem]{Definition}
\newtheorem{remark}[theorem]{Remark}
\title{An $\MakeLowercase{(\underline{s},\overline{s}},S)$ optimal maintenance policy for systems subject to shocks and progressive deterioration} 
\author{Mauricio Junca\thanks{Department of Mathematics, Universidad de los Andes, Bogota, Colombia. Email address: {\tt mj.junca20@uniandes.edu.co}}}
\begin{document}

\maketitle

\begin{abstract}
We define a model of a system that deteriorates as a result of (i) shocks, modeled as a compound Poisson process and (ii) deterministic, state dependent progressive rate, with variable and fixed maintenance cost. We define maintenance strategies based on an impulse control model where time and size of interventions are executed according the system state, which is obtained from permanent monitoring. We characterize the value function as the unique viscosity solution of the HJB equation and prove that an $(\underline{s},\overline{s},S)$ policy is optimal. We also provide numerical examples. Finally, a singular control problem is proposed when there is no fixed cost, which study and relation with the former problem is open for future discussion.
\end{abstract}

\pagestyle{myheadings}

\section{Introduction}

In engineered systems, performance is related to some physical properties such as capacity, stiffness, mechanical resistance, etc., that decrease over time. In the structural reliability literature the structural condition at a given time is defined in practice by a single appropriate structural performance indicator even though several degradation factors may affect the system and cause decay in performance, see \cite{San11}. This performance decaying is known as \emph{deterioration} and it will eventually bring the system to a total failure condition. The use of stochastic models to describe deterioration and its effect in time in the system's performance is therefore important for reliability estimation.

There are different types of degradation processes that can be classified into three categories: shock-based, progressive and a combination of both degradation mechanisms. The first one models the effect of sudden events like earthquakes in infrastructures by reducing lump amounts of the system's performance in the form of jumps or shocks at discrete points in time. In the latter the system's performance is continuously removed over time and can model to processes like deterioration of pavements. \cite{Klutke02} and \cite{San11} propose some models for a combination of these two type of deterioration.

Based on the specific degradation mechanisms and models, different intervention policies can be proposed in order to improve the
availability or to extend the life of the system. Usually, maintenance policies are preventive or corrective. Preventive maintenance is frequently carried out without knowing the actual state of the system at the time of the intervention. In this case the decision variables may be the time of intervention, \cite{Klutke02}. On the other hand, corrective maintenance focuses on interventions once failure has been identified \cite{Kadi}.

This paper presents a maintenance policy based on stochastic control continuing the works of \cite{Junca} and \cite{Junca20131}. We assume that the system is continuously monitored, so maintenance can be executed at any time, and we do not consider any strategy after total failure. This kind of strategy can be considered as a preventive policy. The total failure state is such that no maintenance is possible and build or replace for a new system is necessary. Hence, this can be seen as a renewal epoch of a bigger renewal process model that combines preventice and corrective strategies, see \cite{nakagawa}.

The main goal of this paper is to characterize the value function using viscosity solutions theory and to prove that a $(\underline{s},\overline{s},S)$ policy is optimal for this problem. $(s,S)$ policies has been well established for inventory control problems with different models. In \cite{BenBen,BenLS05} the demand model is a mixture of Compound Poisson, diffusion and deterministic demand, while in \cite{BenJohn,BenJ09} the is no diffusion and the deterministic part is replaced by a function of the stock level, which is very similar to the model presented in this work. We note that the approach to prove policy optimality merely uses the definition of the value function and the form of the Hamilton Jacobi Bellman equation (or QVI) that the function satisfies rather than the analytical properties of the solution of the equation as in the previous references.

We organize the paper as follows: In Section \ref{dynamics} we describe the system's performance dynamics, in Section \ref{impulse} we define the impulse control problem and define the value function. Some properties of the function are also discussed in this section. Section \ref{viscosity} characterizes the value function as the unique viscosity solution of the associated HJB equation. The optimality of a $(\underline{s},\overline{s},S)$ policy is proved in Section \ref{policy}. Numerical examples are shown in Section \ref{numer}. We conclude in Section \ref{future}, propose a singular control problem for the no-fixed cost case and point put some directions for future work.

\section{System dynamics}\label{dynamics}

Consider a system whose performance is defined by a stochastic process $R$. Furthermore, assume that the system is subject to progressive deterioration and shocks. The shocks occur according to a Poisson process, and every shock causes a random amount of damage $S$. Total failure occurs when the system performance falls bellow a pre-defined threshold $m$ and there is a maximum performance level $M$ that cannot be improved. Formally, let $(\Omega,\mathcal{F},\mathbb{P})$ be the probability space in which we define all the stochastic quantities. We define the process $R=\{R_t\}_{t\geq0}$
\begin{equation}
R_t=r-\int_0^tc(R_s)ds-\sum\limits_{i=1}^{N_t}S_i,
\end{equation}
where the initial reliability level is $R_{0-}=r$, $c$ is a positive and non-increasing Lipschitz function with constant $L$ on $[m,M]$, $N=\{N_t\}_{t\geq0}$ is an homogeneous Poisson process with intensity $\lambda>0$, and the sequence of the sizes of the shocks $\{S_i\}_{i\in\mathbb{N}}$ are independent and identically distributed random variables with probability distribution $F$ on $[0,\infty)$ and independent of the Poisson process $N$. Note that $c$ is the rate of deterioration that is assume to be higher for small values of $R_t$ than for large values, that is, the better the system the bigger its resistance. We also assume that the support of $F$ contains the interval $[0,M-m]$.

\section{Impulse control}\label{impulse}

The impulse control model for maintenance is studied for the first time in \cite{Junca}. This type of control is defined as follows:
\begin{definition}
A \textit{maintenance policy} for the system is a double sequence $\nu=\{(\tau_i,\zeta_i)\}_{i\in\mathbb{N}}$  of intervention times $\tau_i$ at which the performance is improved an amount $\zeta_i$. The policy is an \textit{impulse control} if satisfies the following conditions:
\begin{enumerate}
\item $0\leq\tau_i\leq\tau_{i+1}$ a.s. for all $i\in\mathbb{N}$,
\item $\tau_i$ is a stopping time with respect to the filtration $\mathcal{F}_t=\sigma\{R_{s}|s\leq t\}$ for $t\geq0$,
\item $\zeta_i$ is a $\mathcal{F}_{\tau_i}$-measurable random variable.
\item $\lim\limits_{i\rightarrow\infty}\tau_i=\infty$ a.s.
\end{enumerate}
\end{definition}

Given an impulse control $\nu$, the controlled process $R^{\nu}=\{R^{\nu}\}_{t\geq0}$ is defined by
\begin{equation}
R_t^{\nu}=r-\int_0^tc(R_s^{\nu})ds-\sum\limits_{i=1}^{N_t}S_i+\sum\limits_{\tau_i< t}\zeta_i.
\end{equation}

The time of total failure of the controlled process is
\begin{equation}
\tau^{\nu}=\inf\{t\geq0|R^{\nu}_t< m\},
\end{equation}
and it is assumed that if the system reaches the threshold the process is stopped. We denote by $\tau^0$ the time of total failure of the uncontrolled process $R$.

Any intervention at time $\tau_i$ depends on the state of the system at this time, that is $R_{\tau_i}$. Since the state of the system cannot be above $M$, the set of possible actions is $[0,M-R_{\tau_i}]$. We call $\mathcal{I}$ the set of admissible impulse controls such that $\zeta_i\in[0,M-R_{\tau_i}]$ for all $i\in\mathbb{N}$. Each intervention has a cost given by $C(r,\zeta)$, which is the cost of bringing the system from level $r$ to level $r+\zeta$. We assume that $C(r,\zeta)=H(r+\zeta)-H(r)+k$, with $H$ a continuous and increasing function and $k>0$ is the fixed cost.

On the other hand, there is a benefit for keeping the system at a given state $r$ denoted by $G(r)$, where $G$ is a non-negative continuous, increasing function on $[m,M]$.

If we denote $\mathbb{E}_{r}[\cdot]:=\mathbb{E}[\cdot |R^{\nu}_{0-}=r]$, for a given $\nu\in\mathcal{I}$ and initial component state $r\in[m,M]$, the expected discounted profit can be computed as
\begin{equation}
J(r,\nu)=\mathbb{E}_{r}\left[\int_0^{\tau^{\nu}}e^{-\delta s}G(R_s^{\nu})ds-\sum\limits_{\tau_i<\tau^{\nu}}e^{-\delta\tau_i}C(R^{\nu}_{\tau_i},\zeta_i)\right],
\label{CBeq}
\end{equation}
where $\delta>0$ is the discount factor. 

The value function is defined as
\begin{equation}\label{valimp}
V(r)=\sup_{\nu\in\mathcal{I}}J(r,\nu)
\end{equation}
 and it is discussed in detail in \cite{Junca,Junca20131} for the case when there is no progressive deterioration. We will show that $V$ is the unique bounded viscosity solution to the corresponding Hamilton-Jacobi-Bellman (HJB) equation

\begin{equation}\label{hjbimp}
\min\{\delta f(r)-\mathcal{A}f(r)-G(r),f(r)-\mathcal{M}f(r)\}=0,
\end{equation}
for all $r\in[m,M]$, where 
\begin{equation}\label{infinoper}
\mathcal{A}f(r)=-c(r)f'(r)+\lambda\left(\int_0^{r-m}f(r-s)dF(s)-f(r)\right)
\end{equation}
and
\begin{equation}\label{interoper}
\mathcal{M}f(r)=\sup_{0\leq\zeta\leq M-r}f(r+\zeta)-H(r+\zeta)+H(r)-k.
\end{equation}

From the definition of the value function $V$, we can see that it is non-negative and bounded. Also, if we define $V$ for $r<m$, we have that $V(r)=0$. Another important property is the continuity as we will see next.

\begin{lemma}\label{contlemma}
Let $R_t^r$ denote the uncontrolled process with initial level $r$. Let $\tau\leq T<\infty$ be a stopping time, then 
$$\lim\limits_{r'\rightarrow r}R_{\tau}^{r'}=R_{\tau}^r\textrm{ a.s.}$$
\end{lemma}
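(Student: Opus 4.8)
The plan is to exploit the fact that, on the common probability space, both $R^{r'}$ and $R^r$ are driven by the \emph{same} Poisson process $N$ and the \emph{same} shock sequence $\{S_i\}$, and that a shock always subtracts the amount $S_i$ regardless of the current performance level. Consequently the difference process $D_t := R_t^{r'} - R_t^r$ carries no jumps: at each jump time both paths decrease by the identical amount $S_i$, so the jump contributions cancel and $D$ is, almost surely, continuous on $[0,T]$. The problem thereby reduces to a deterministic continuity-in-initial-condition statement for the flow of the deterioration ODE $\dot R = -c(R)$, interrupted by common jumps.

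First I would fix a sample point $\omega$ outside the null set on which $N$ has infinitely many jumps in $[0,T]$; on the remaining full-measure set $N$ has finitely many jump times $0 < t_1 < \dots < t_n \le T$. On each open interval between consecutive jumps both $R^{r'}$ and $R^r$ solve $\dot R = -c(R)$, so there $D$ is differentiable with $\dot D_t = -(c(R_t^{r'}) - c(R_t^r))$. Combining this with the cancellation of the jump terms, $D$ is absolutely continuous on all of $[0,T]$ and satisfies
$$D_t = (r' - r) - \int_0^t \bigl(c(R_s^{r'}) - c(R_s^r)\bigr)\,ds.$$

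Next I would invoke the Lipschitz property of $c$ with constant $L$ to bound
$$|D_t| \le |r' - r| + L\int_0^t |D_s|\,ds,$$
and apply Gronwall's inequality to obtain the uniform-in-time estimate $\sup_{0 \le t \le T} |D_t| \le |r' - r|\,e^{LT}$. Since $\tau(\omega) \le T$, this yields $|R_\tau^{r'} - R_\tau^r| \le |r' - r|\,e^{LT}$, which tends to $0$ as $r' \to r$; the convergence is almost sure because the estimate holds on the full-measure set identified above.

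The main obstacle will be the bookkeeping needed to justify that $D$ is genuinely absolutely continuous \emph{across} the jump times rather than merely between them, i.e.\ making the cancellation of the jump terms rigorous so that Gronwall may be applied on the whole interval $[0,T]$ at once rather than piece by piece. A secondary technical point is ensuring the Lipschitz bound for $c$ is available along both trajectories: since the two paths stay within a band of width at most $|r'-r|e^{LT}$ of one another, for $r'$ sufficiently close to $r$ they remain in the region where $c$ is defined and Lipschitz, so this causes no real difficulty.
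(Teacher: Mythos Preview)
Your proof is correct and follows essentially the same route as the paper: both use the Lipschitz property of $c$ together with Gronwall's inequality to obtain the uniform bound $|R_t^{r'}-R_t^r|\le|r'-r|e^{LT}$. The only cosmetic difference is that the paper applies Gronwall on each inter-jump interval and then iterates the estimate across the finitely many shock times, whereas you observe up front that the common jumps cancel in $D_t$ and apply Gronwall once on all of $[0,T]$; both organizations yield the identical final bound.
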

\begin{proof}
First note that before any shock occurs we have that
\begin{align*}
\frac{d}{dt}|R_t^{r'}-R_t^r|&\leq\left|\frac{dR_t^{r'}}{dt}-\frac{dR_t^{r}}{dt}\right|\\
&=|c(R_t^{r'})-c(R_t^{r})|\leq L|R_t^{r'}-R_t^{r'}|.
\end{align*}
By Gronwall's inequality we get that $|R_t^{r'}-R_t^r|\leq|r'-r|e^{Lt}$. Now, for each $\omega\in\Omega$, with shock times $T_1(\omega), T_2(\omega),\ldots$, there is $n$ such that $T_n(\omega)\leq \tau(\omega)<T_{n+1)}(\omega)$. Hence
\begin{align*}
|R_{\tau(\omega)}^{r'}-R_{\tau(\omega)}^r|&\leq|R_{T_n(\omega)}^{r'}-R_{T_n(\omega)}^r|e^{L(\tau(\omega)-T_n(\omega))}\\
&\leq|R_{T_{n-1}(\omega)}^{r'}-R_{T_{n-1}(\omega)}^r|e^{L(\tau(\omega)-T_{n-1}(\omega))}\\
&\leq|r'-r|e^{L\tau(\omega)}\leq|r-r'|e^{LT}.
\end{align*}
This proves the lemma.
\end{proof}

This lemma allows us to prove the following proposition:

\begin{proposition}\label{continuity}
The value function $V$ is non-decreasing and continuous on $[m,M]$.
\end{proposition}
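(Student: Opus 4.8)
The plan is to prove both assertions by a pathwise coupling of the controlled dynamics along a common realization of the shocks $(N,\{S_i\})$, invoking Lemma \ref{contlemma} only at the limiting step. Throughout I will use that $G$ and $H$ are continuous on the compact interval $[m,M]$, hence bounded and uniformly continuous, and that $\delta>0$; consequently every payoff $J(r,\nu)$ is bounded uniformly in $r$ and $\nu$, which legitimizes the dominated-convergence arguments below.

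\emph{Monotonicity.} Fix $r_1\le r_2$ and an admissible $\nu=\{(\tau_i,\zeta_i)\}$ for the process starting at $r_1$, with target levels $y_i=R^{\nu}_{\tau_i-}+\zeta_i\le M$. For the process starting at $r_2$ I would use the \emph{same-target} policy $\nu'$ that acts only at those times $\tau_i$ for which $R^{\nu'}_{\tau_i-}<y_i$, raising the process exactly to $y_i$, and does nothing otherwise. First I would check, using that $c$ is non-increasing, that between shocks $\tfrac{d}{dt}(R^{\nu'}_t-R^{\nu}_t)=c(R^{\nu}_t)-c(R^{\nu'}_t)\ge0$ whenever the two are ordered, and that every shock and every same-target intervention preserves the ordering; hence $R^{\nu'}_t\ge R^{\nu}_t$ for all $t$, the $r_2$-process never exceeds $M$, and $\tau^{\nu'}\ge\tau^{\nu}$. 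Since $G\ge0$ is non-decreasing the running reward can only grow, while at each performed intervention the cost $H(y_i)-H(R^{\nu'}_{\tau_i-})+k\le H(y_i)-H(R^{\nu}_{\tau_i-})+k$ (because $H$ is increasing and $R^{\nu'}_{\tau_i-}\ge R^{\nu}_{\tau_i-}$) and the skipped interventions cost nothing. Therefore $J(r_2,\nu')\ge J(r_1,\nu)$, and taking the supremum over $\nu$ yields $V(r_2)\ge V(r_1)$.

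\emph{Continuity.} As $V$ is non-decreasing it has one-sided limits everywhere, so it suffices to rule out jumps, and I claim this follows from the single estimate $V(r_1)\ge V(r_2)-\omega(r_2-r_1)$ for $r_1<r_2$ with $\omega(\eta)\to0$: applied along $r_1\uparrow r$ it gives left-continuity and along $r_2\downarrow r$ it gives right-continuity, once combined with monotonicity. To prove the estimate I would fix an $\varepsilon$-optimal $\nu$ for $r_2$ and transplant it to $r_1$ by the same-target rule. Since now $r_1<r_2$, for the $r_1$-process one has $R_{\tau_i-}$ below each target $y_i$, so the \emph{first} intervention raises it exactly to $y_1$ and, under the common shocks, the two controlled trajectories coincide from $\tau_1$ onward. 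Hence $J(r_2,\nu)-J(r_1,\nu')$ is carried entirely by $[0,\tau_1)$, where both processes are uncontrolled and equal to $R^{r_1},R^{r_2}$: a running-reward difference $\int_0^{\tau_1}e^{-\delta s}\bigl(G(R^{r_2}_s)-G(R^{r_1}_s)\bigr)\,ds$ and one cost gap $H(R^{r_2}_{\tau_1-})-H(R^{r_1}_{\tau_1-})$. By Lemma \ref{contlemma} and the bound $|R^{r_2}_t-R^{r_1}_t|\le(r_2-r_1)e^{Lt}$ both quantities converge pathwise to $0$ as $r_1\uparrow r_2$; uniform continuity of $G,H$, the discount factor, and dominated convergence send their expectations to $0$.

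\emph{Main obstacle.} The delicate point is that the coupling can break before $\tau_1$: the lower trajectory may reach the failure level $m$ on $[0,\tau_1)$ while the upper one survives to be rescued at $\tau_1$, and on this event the entire post-$\tau_1$ payoff is lost for $r_1$. I would control its contribution by showing $\tau^{r_1}\to\tau^{r_2}$ almost surely as $r_1\uparrow r_2$: between shocks the uncontrolled path strictly decreases ($\dot R=-c(R)<0$), so it crosses $m$ transversally, and otherwise crosses through a shock common to all nearby paths, whence the first-passage time to $m$ depends continuously on the initial level off a null set. This makes the expected payoff on the mismatch region vanish and completes the dominated-convergence argument, giving $J(r_1,\nu')\to J(r_2,\nu)\ge V(r_2)-\varepsilon$; letting $\varepsilon\to0$ establishes the estimate and hence continuity.
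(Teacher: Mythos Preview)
Your proposal is correct and follows essentially the same route as the paper's proof: both argue monotonicity by coupling a policy started from the lower level with a same-target policy from the higher level (so that trajectories stay ordered and costs do not increase), and both prove continuity by transplanting an $\varepsilon$-optimal policy for the larger initial level to the smaller one via matching at the first intervention, so that the two controlled paths coincide from $\tau_1$ onward and only the pre-$\tau_1$ running reward, the first-intervention cost gap $H(R^{r_2}_{\tau_1-})-H(R^{r_1}_{\tau_1-})$, and the failure-mismatch event need to be controlled, all via Lemma~\ref{contlemma} and dominated convergence. The paper organizes the last step by splitting into the events $\{\tau^{\bar\nu}<\tau^{\nu}\}$ and $\{\tau^{\bar\nu}=\tau^{\nu}\}$ and invoking the uniform boundedness of the uncontrolled lifetime $\tau^0$ (a consequence of $c>0$) to apply Lemma~\ref{contlemma}; your transversal-crossing argument for $\tau^{r_1}\to\tau^{r_2}$ a.s.\ is a slightly different packaging of the same point, but you should make explicit, as the paper does, that $\tau^0$ is deterministically bounded so that the hypothesis $\tau\le T<\infty$ of Lemma~\ref{contlemma} is met.
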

\begin{proof}
We will first show that the value function is non-decreasing. Let $m\leq r<r'\leq M$ and $\nu=\{(\tau_i,\zeta_i)\}\in\mathcal{I}$ with initial level $r$. Define the policy $\bar{\nu}$ with initial level $r'$ as following: Consider $i^*=\min\{i: R^{\nu}_{\tau_i}+\zeta_i>R^{r'}_{\tau_i}\}$, where $R_t^{r'}$ denotes the uncontrolled process with initial level $r'$. Then, let $\bar{\nu}=\{(\bar{\tau}_i,\bar{\zeta}_i)\}$ with $\bar{\tau}_1=\tau_{i^*}$, $\bar{\zeta}_1=R^{\nu}_{\tau_i}+\zeta_i-R^{r'}_{\tau_i}$ and for $i>1$ $\bar{\tau}_i=\tau_{i^*+i-1}$ and $\bar{\zeta}_i=\zeta_{i^*+i-1}$ in the case $i^*<\infty$. If $i^*=\infty$ then do nothing. Hence, it is easy to see that $R_t^{\nu}\leq R_t^{\bar{\nu}}$ for all $t\geq0$ and 
also the maintenance cost of the policy $\nu$ is greater than the cost of policy $\bar{\nu}$. Therefore, $J(r,\nu)\leq J(r',\bar{\nu})\leq V(r')$. Since the policy $\nu$ is arbitrary we get that $V(r)\leq V(r')$.

Consider now an $\epsilon-$optimal policy $\nu=\{(\tau_i,\zeta_i)\}$ with initial level $r'$, that is, $V(r')\leq J(r',\nu)+\epsilon$. We define the policy $\bar{\nu}$, with initial level $r$, equal to $\nu$ except for the size of the first intervention: $\bar{\zeta}_1=\zeta_1+R^{r'}_{\tau_1}-R^{r}_{\tau_1}$.

Now, we consider two disjoint events:
\begin{itemize}
\item $\{\tau^{\bar{\nu}}<\tau^{\nu}\}$: In this case we must have that $\tau^{\bar{\nu}}=\tau^0$ and $R^{r}_{\tau^{\bar{\nu}}}<0\leq R^{r'}_{\tau^{\bar{\nu}}}$. Since $\tau^0$ is bounded due to progressive deterioration, by Lemma \ref{contlemma} 
\begin{equation}\label{th1}
\lim\limits_{r\rightarrow r'-}\mathbb{P}\left(\tau^{\bar{\nu}}<\tau^{\nu}\right)=0.
\end{equation}
\item $\{\tau^{\bar{\nu}}=\tau^{\nu}\}$: In this case we have two possibilities, $\tau^{\nu}\leq\tau_1$ or $\tau^{\nu}>\tau_1$. In the second case $R_s^{\bar{\nu}}=R_s^{\nu}$ for $s\geq\tau_1$.
\end{itemize}

From both cases above we have that
\begin{align}\label{th2}
V(r)&\geq J(r,\bar{\nu})=\mathbb{E}\left[1_{\{\tau^{\bar{\nu}}<\tau^{\nu}\}}\int_0^{\tau^{\bar{\nu}}}e^{-\delta s}G(R_s^{r})ds\right]\\\nonumber
&+\mathbb{E}\left[1_{\{\tau^{\bar{\nu}}=\tau^{\nu}\}}\int_0^{\tau^{\nu}\wedge\tau_1}e^{-\delta s}G(R_s^{r})ds\right]-\mathbb{E}\left[1_{\{\tau^{\nu}>\tau_1\}}e^{-\delta\tau_1}C(R^{r}_{\tau_1},\bar{\zeta}_1)\right]\\\nonumber
&+\mathbb{E}\left[1_{\{\tau^{\nu}>\tau_1\}}\left(\int_{\tau_1}^{\tau^{\nu}}e^{-\delta s}G(R_s^{\nu})ds-\sum\limits_{\tau_1<\tau_i<\tau^{\bar{\nu}}}e^{-\delta\tau_i}C(R^{\nu}_{\tau_i},\zeta_i)\right)\right].\\\nonumber
\end{align}
On the other hand
\begin{align}\label{th3}
V(r')-\epsilon&\leq J(r',\nu)=\mathbb{E}\left[1_{\{\tau^{\bar{\nu}}<\tau^{\nu}\}}\left(\int_0^{\tau^{\nu}}e^{-\delta s}G(R_s^{\nu})ds-\sum\limits_{\tau_i<\tau^{\bar{\nu}}}e^{-\delta\tau_i}C(R^{\nu}_{\tau_i},\zeta_i)\right)\right]\\\nonumber
&+\mathbb{E}\left[1_{\{\tau^{\bar{\nu}}=\tau^{\nu}\}}\int_0^{\tau^{\nu}\wedge\tau_1}e^{-\delta s}G(R_s^{r'})ds\right]-\mathbb{E}\left[1_{\{\tau^{\nu}>\tau_1\}}e^{-\delta\tau_1}C(R^{r'}_{\tau_1},\zeta_1)\right]\\\nonumber
&+\mathbb{E}\left[1_{\{\tau^{\nu}>\tau_1\}}\left(\int_{\tau_1}^{\tau^{\nu}}e^{-\delta s}G(R_s^{\nu})ds-\sum\limits_{\tau_1<\tau_i<\tau^{\bar{\nu}}}e^{-\delta\tau_i}C(R^{\nu}_{\tau_i},\zeta_i)\right)\right].\\\nonumber
\end{align}

Combining \eqref{th2} and \eqref{th3} we have that
\begin{align}\label{th4}
V(r)&\geq\mathbb{E}\left[1_{\{\tau^{\bar{\nu}}<\tau^{\nu}\}}\int_0^{\tau^{\bar{\nu}}}e^{-\delta s}G(R_s^{r})ds\right]\\\label{th5}
&+\mathbb{E}\left[1_{\{\tau^{\bar{\nu}}=\tau^{\nu}\}}\int_0^{\tau^{\nu}\wedge\tau_1}e^{-\delta s}\left(G(R_s^{r})-G(R_s^{r'})\right)ds\right]\\\label{th6}
&-\mathbb{E}\left[1_{\{\tau^{\bar{\nu}}<\tau^{\nu}\}}\left(\int_0^{\tau^{\nu}}e^{-\delta s}G(R_s^{\nu})ds-\sum\limits_{\tau_i<\tau^{\nu}}e^{-\delta\tau_i}C(R^{\nu}_{\tau_i},\zeta_i)\right)\right]\\\label{th7}
&+\mathbb{E}\left[1_{\{\tau^{\nu}>\tau_1\}}e^{-\delta\tau_1}\left(H(R^r_{\tau_1})-H(R^{r'}_{\tau_1})\right)\right]\\\nonumber
&+V(r')-\epsilon.
\end{align}

From \eqref{th1} the terms \eqref{th4} and \eqref{th6} converges to 0 as $r\rightarrow r'-$. Also, by Lemma \ref{contlemma}, bounded convergence theorem and continuity of the functions $G$ and $H$, the terms \eqref{th5} and \eqref{th7} converges to 0. Similarly, we get the same result when $r'\rightarrow r+$. Since $\epsilon$ is arbitrary, the proposition follows by the non-decreasing property.
\end{proof}

\begin{remark}\label{remcont} An important consequence of the theorem is that $\mathcal{M}V$ is also a continuous function, as can be easily checked.
\end{remark}

\section{Viscosity solution}\label{viscosity}

In this section we prove that the value function $V$, defined in \eqref{valimp}, is a solution of the HJB equation \eqref{hjbimp}. Since we do not know the exact regularity of the $V$, we need the appropriate notion of solution. We consider the notion of viscosity solution. Viscosity solutions were introduce in \cite{Crandall} for first-order Hamilton Jacobi equations. Later, different generalizations were proposed, for example in \cite{Azcue} it is defined for first-order integro-differential equations. We will use the following definition:

\begin{definition}\label{defviscosityimp}
\begin{enumerate}
\item[(i)] A \emph{viscosity subsolution} of \eqref{hjbimp} is an upper semi-continuous function on $[m,M]$ $u$ such that for each $\varphi\in C^1(m,M)$,
$$\min\left\{\delta \varphi(r)-\mathcal{A}\varphi(r)-G(r),u(r)-\mathcal{M}u(r)\right\}\leq0$$ 
at every $r\in(m,M)$ which is a maximizer of $u-\varphi$ with $u(r)=\varphi(r)$.

\item[(ii)] A \emph{viscosity supersolution} of \eqref{hjbimp} is an lower semi-continuous function on $[m,M]$ $v$ such that for each $\phi\in C^1(m,M)$,
$$\min\left\{\delta \phi(r)-\mathcal{A}\phi(r)-G(r),v(r)-\mathcal{M}v(r)\right\}\geq0$$ 
at every $r\in(m,M)$ which is a minimizer of $v-\phi$ with $v(r)=\phi(r)$.

\item[(iii)] A function is a \emph{viscosity solution} of \eqref{hjbimp} if it is both a viscosity subsolution and a viscosity supersolution.
\end{enumerate}
\end{definition}

Before proving that $V$ is a viscosity solution of \eqref{hjbimp} we need the following lemma proved in \cite{Junca}. We include the proof in \ref{applema}. 

\begin{lemma}\label{lema}
Let $\tau$ be a stopping time. Then for all $r\in[m,M]$
\begin{equation}\label{dynamic}
V(r)\geq\mathbb{E}_r\left[\int_0^{\tau\wedge\tau^0}e^{-\delta s}G(R^r_s)ds+1_{\{\tau<\tau^0\}}e^{-\delta\tau}V(R^r_{\tau})\right].
\end{equation}
Furthermore, we have equality in \eqref{dynamic} if it is not optimal to intervene the system before $\tau$.
\end{lemma}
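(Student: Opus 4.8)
The plan is to prove the inequality \eqref{dynamic} by exhibiting an admissible policy whose profit realizes, up to an arbitrarily small error, the right-hand side, and then to invoke the definition $V(r)=\sup_{\nu\in\mathcal{I}}J(r,\nu)$. The equality under the non-intervention hypothesis will follow by decomposing a near-optimal policy in the same manner and passing to the supremum.

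First I would fix $\epsilon>0$ and, for each $y\in[m,M]$, select an $\epsilon$-optimal policy $\nu^y\in\mathcal{I}$ with $J(y,\nu^y)\geq V(y)-\epsilon$, where a measurable selection guarantees that $y\mapsto\nu^y$ may be taken measurably. I would then define a policy $\nu$ with initial level $r$ that performs no intervention on $[0,\tau)$ and, on the event $\{\tau<\tau^0\}$, restarts the policy $\nu^{R^r_\tau}$ from time $\tau$, shifting its intervention times and sizes by $\tau$. Since no action is taken before $\tau$, the controlled process agrees with the uncontrolled process $R^r$ on $[0,\tau\wedge\tau^0)$, and one checks that $\nu\in\mathcal{I}$.

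The key step is the decomposition of the profit. Using the strong Markov property and the time-homogeneity of the dynamics, together with the fact that on $\{\tau\geq\tau^0\}$ failure has already occurred (so $\tau\wedge\tau^0=\tau^0$ and no continuation reward accrues), I would write
$$J(r,\nu)=\mathbb{E}_r\left[\int_0^{\tau\wedge\tau^0}e^{-\delta s}G(R^r_s)\,ds+1_{\{\tau<\tau^0\}}e^{-\delta\tau}J(R^r_\tau,\nu^{R^r_\tau})\right].$$
Bounding $J(R^r_\tau,\nu^{R^r_\tau})\geq V(R^r_\tau)-\epsilon$, using $V(r)\geq J(r,\nu)$, and letting $\epsilon\to0$ (the error term is controlled since $V$ is bounded and $e^{-\delta\tau}\leq1$) yields \eqref{dynamic}.

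For the equality under the stated hypothesis, I would observe that if it is not optimal to intervene before $\tau$, then there exist near-optimal policies whose first intervention time is at least $\tau$. Each such policy admits the same decomposition, with its continuation after $\tau$ bounded above by $V(R^r_\tau)$; taking the supremum over these policies, which by hypothesis recovers $V(r)$, gives the reverse inequality. The main obstacle I anticipate is making the pasting of policies at the random time $\tau$ rigorous: the measurable selection of $y\mapsto\nu^y$, the verification that the concatenated $\nu$ is admissible, and the careful application of the strong Markov property to justify the additive-multiplicative splitting of $J$ across $\tau$.
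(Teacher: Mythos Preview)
Your proposal is correct and follows essentially the same approach as the paper: restrict to policies that do not intervene before $\tau$, decompose the profit using the strong Markov property into a pre-$\tau$ integral plus a discounted continuation, and use $\epsilon$-optimal continuations at $R^r_\tau$ to recover $V(R^r_\tau)$; the equality case is handled identically by noting that under the hypothesis near-optimal policies already lie in this restricted class. The only minor difference is that you build the concatenated policy explicitly and flag the measurable-selection issue, whereas the paper works directly with the class $\mathcal{I}_\tau$ of controls with $\tau_1\geq\tau$ and leaves that point implicit.
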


\begin{theorem}\label{viscoimp}
The value function $V$ defined by \eqref{valimp} is a viscosity solution of equation \eqref{hjbimp}.
\end{theorem}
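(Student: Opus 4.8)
The plan is to verify the two viscosity inequalities separately. In each case I would combine the dynamic programming principle of Lemma \ref{lema} with an infinitesimal (Dynkin-type) expansion of the test function along the uncontrolled trajectory, using that the generator of $R$ killed at $\tau^0$ is exactly the operator $\mathcal{A}$ in \eqref{infinoper}: the upper limit $r-m$ in the integral encodes the killing of those shocks that send the system below $m$.

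\emph{Supersolution.} Two things must be shown. First, $V(r)\geq\mathcal{M}V(r)$ for every $r$: given $\zeta\in[0,M-r]$, the admissible policy that intervenes immediately, paying $C(r,\zeta)=H(r+\zeta)-H(r)+k$, and then proceeds $\epsilon$-optimally from level $r+\zeta$ shows $V(r)\geq V(r+\zeta)-H(r+\zeta)+H(r)-k$; taking the supremum over $\zeta$ gives the claim. Second, for a test function $\phi$ touching $V$ from below at $r$ (global minimizer of $V-\phi$ with $V(r)=\phi(r)$, so $V\geq\phi$ on $[m,M]$), I would apply the inequality \eqref{dynamic} with $\tau_h=h\wedge T_1\wedge\tau^0$, replace $V(R^r_{\tau_h})$ by the smaller $\phi(R^r_{\tau_h})$, expand to first order in $h$, divide by $h$ and let $h\to0$. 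The no-shock event supplies the drift and discount terms, while the event $\{T_1\leq h\}$ (of probability $\approx\lambda h$) produces the integral term $\lambda\int_0^{r-m}\phi(r-s)\,dF(s)$; continuity of $\mathcal{A}\phi$ and $G$ then yields $\delta\phi(r)-\mathcal{A}\phi(r)-G(r)\geq0$. Together with $V\geq\mathcal{M}V$ this gives the supersolution inequality.

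\emph{Subsolution.} Let $\varphi$ touch $V$ from above at $r$ (global maximizer of $V-\varphi$ with $V(r)=\varphi(r)$, so $V\leq\varphi$). If $V(r)\leq\mathcal{M}V(r)$ the minimum is already $\leq0$, so assume $V(r)>\mathcal{M}V(r)$. By continuity of $V$ and of $\mathcal{M}V$ (Proposition \ref{continuity} and Remark \ref{remcont}) there is a neighborhood of $r$ on which $V>\mathcal{M}V$, i.e.\ a piece of the continuation region. Choosing again $\tau_h=h\wedge T_1\wedge\tau^0$, the trajectory decreases continuously and stays in this neighborhood up to $\tau_h$, so no intervention is optimal before $\tau_h$ and the equality case of Lemma \ref{lema} applies. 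Bounding $V(R^r_{\tau_h})\leq\varphi(R^r_{\tau_h})$ and running the same first-order expansion (now with the inequality in the favorable direction) produces $\delta\varphi(r)-\mathcal{A}\varphi(r)-G(r)\leq0$, hence the subsolution inequality.

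\emph{Main obstacle.} The delicate point is the non-local term: both expansions replace $V$ by the test function at the post-shock state $R^r_{T_1-}-S$, which ranges over all of $[m,r]$, so the comparison $V\gtrless\phi$ must hold globally and not merely near $r$ — this is why the extrema in Definition \ref{defviscosityimp} are to be read as global under the normalization $u(r)=\varphi(r)$. One must also track carefully that shocks with $S>R^r_{T_1-}-m$ trigger total failure and contribute nothing beyond $\tau^0$, which is exactly what truncates the integral at $r-m$ and makes the generator coincide with $\mathcal{A}$; stopping at the first shock time $T_1$ is the device that keeps the trajectory in the continuation region (for the subsolution) while still exposing the jump term at first order in $h$.
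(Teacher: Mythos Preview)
Your argument is correct and follows essentially the same route as the paper's own proof: both invoke Lemma \ref{lema} (the dynamic programming inequality, with equality on the continuation region where $V>\mathcal{M}V$) together with a Dynkin-type expansion of the test function, and both rely on the global comparison $V\lessgtr\varphi$ to handle the non-local jump term. The only difference is packaging---the paper argues by contradiction using the exit time $\bar\tau$ from an $\epsilon$-neighborhood, while you argue directly with $\tau_h=h\wedge T_1\wedge\tau^0$ and a first-order expansion in $h$; these are standard equivalent variants.
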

\begin{proof}
First of all, Theorem \ref{continuity} establishes the continuity of $V$, so it is both lower and upper semi-continuous. To prove the subsolution property let $r_0\in(m,M)$ and $\varphi\in C^1$ such that $V-\varphi\leq0=V(r_0)-\varphi(r_0)$. Suppose by contradiction that $V$ is not a viscosity subsolution of \eqref{hjbimp}, hence, by Remark \ref{remcont}, there exist $\alpha>0$ and $\epsilon>0$ such that 
\begin{equation}\label{sub1}
\delta \varphi(r)-\mathcal{A}\varphi(r)-G(r)>\alpha
\end{equation}
and
\begin{equation}\label{sub2}
V(r)-\mathcal{M}V(r)>\alpha,
\end{equation}
for all $r\in(r_0-\epsilon,r_0+\epsilon)\subset(m,M)$. \eqref{sub2} implies that it is not optimal to intervene the system for levels in such interval. Let $\bar{\tau}=\inf\{s\geq0:|R^{r_0}_s-r_0|\geq\epsilon\}$. By Dynkin's formula (see \cite{Junca}) we have that
\begin{align*}
\mathbb{E}_{r_0}\left[e^{-\delta\bar{\tau}}V(R_{\bar{\tau}})\right]&\leq\mathbb{E}_{r_0}\left[e^{-\delta\bar{\tau}}\varphi(R_{\bar{\tau}})\right]\\
&=\varphi(r_0)+\mathbb{E}_{r_0}\left[\int_0^{\bar{\tau}}e^{-\delta s}\left(\mathcal{A}\varphi(R_s)-\delta\varphi(R_s)\right)ds\right]\\
&\leq V(r_0)-\mathbb{E}_{r_0}\left[\int_0^{\bar{\tau}}e^{-\delta s}\left(\alpha+G(R_s)\right)ds\right]\\
&=\mathbb{E}_{r_0}\left[e^{-\delta\bar{\tau}}V(R_{\bar{\tau}})\right]-\alpha\mathbb{E}_{r_0}\left[\int_0^{\bar{\tau}}e^{-\delta s}ds\right],
\end{align*}
where the last equality follows from Lemma \ref{lema}. Since $\alpha>0$, this implies $\bar{\tau}=0$ a.s. which is impossible.

To establish the supersolution property let  $r_0\in(m,M)$ and $\phi\in C^1$ such that $V-\phi\geq0=V(r_0)-\phi(r_0)$. It is always true that $V-\mathcal{M}V\geq0$, so suppose by contradiction that there exist $\alpha>0$ and $\epsilon>0$ such that 
\begin{equation}\label{sup1}
\delta \phi(r)-\mathcal{A}\phi(r)-G(r)<-\alpha
\end{equation}
for all $r\in(r_0-\epsilon,r_0+\epsilon)\subset(m,M)$.  Let $\bar{\tau}$ be defined as before. Hence
\begin{align*}
\mathbb{E}_{r_0}\left[e^{-\delta\bar{\tau}}\phi(R_{\bar{\tau}})\right]&=\phi(r_0)+\mathbb{E}_{r_0}\left[\int_0^{\bar{\tau}}e^{-\delta s}\left(\mathcal{A}\phi(R_s)-\delta\phi(R_s)\right)ds\right]\\
&\geq V(r_0)+\mathbb{E}_{r_0}\left[\int_0^{\bar{\tau}}e^{-\delta s}\left(\alpha-G(R_s)\right)ds\right]\\
&\geq\mathbb{E}_{r_0}\left[e^{-\delta\bar{\tau}}V(R_{\bar{\tau}})\right]+\alpha\mathbb{E}_{r_0}\left[\int_0^{\bar{\tau}}e^{-\delta s}ds\right]\\
&\geq\mathbb{E}_{r_0}\left[e^{-\delta\bar{\tau}}\phi(R_{\bar{\tau}})\right]+\alpha\mathbb{E}_{r_0}\left[\int_0^{\bar{\tau}}e^{-\delta s}ds\right],
\end{align*}
where the second inequality follows from Lemma \ref{lema}. We have the same contradiction as before and therefore the theorem is proved.
\end{proof}

\begin{remark}\label{k0} The proof above woks the same even if there is no fixed cost, that is if $k=0$.\end{remark}

\subsection{Uniqueness}

Now, we are going to characterize the value function $V$ among all viscosity solution of \eqref{hjbimp}. The first step is to prove a \emph{comparison principle} for this equation. In order to do this we define the operator
\begin{equation}\label{infinoper2}
\mathcal{A}(f,u)(r)=-c(r)f'(r)+\lambda\left(\int_0^{r-m}u(r-s)dF(s)-u(r)\right),
\end{equation}
for $f\in C^1$ and $u$ continuous. An equivalent definition of viscosity solution can be given in terms of this operator instead of $\mathcal{A}$ (see \cite{Azcue}). Using some ideas of \cite{Seydel} we have the following comparison principle:
\begin{proposition}\label{uniqimp}
Let $u$ be a viscosity subsolution of \eqref{hjbimp} and $v$ be a viscosity supersolution of \eqref{hjbimp} such that $u(m)\leq v(m)$ and $u(M)\leq v(M)$. Then $u\leq v$ in $[m,M]$.
\end{proposition}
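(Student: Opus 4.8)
The plan is to argue by contradiction, assuming $M_0 := \sup_{[m,M]}(u-v) > 0$, and to combine the classical doubling-of-variables technique with the cost structure to absorb the impulse term. Since $u$ is upper semicontinuous and $v$ is lower semicontinuous on the compact interval $[m,M]$, the function $u-v$ is upper semicontinuous and attains $M_0$; moreover the boundary hypotheses $u(m)\le v(m)$ and $u(M)\le v(M)$ force every maximizer into the interior $(m,M)$, because the strict value $M_0>0$ cannot be attained at $m$ or $M$. For fixed $\varepsilon>0$ I would then maximize
$$\Phi_\varepsilon(r,s)=u(r)-v(s)-\frac{(r-s)^2}{2\varepsilon}$$
over $[m,M]^2$, obtaining a maximizer $(r_\varepsilon,s_\varepsilon)$. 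The standard penalization estimates give $(r_\varepsilon-s_\varepsilon)^2/\varepsilon\to0$, $r_\varepsilon,s_\varepsilon\to\bar r\in(m,M)$ with $u(r_\varepsilon)\to u(\bar r)$, $v(s_\varepsilon)\to v(\bar r)$, and $u(r_\varepsilon)-v(s_\varepsilon)\to M_0$. The key feature of the quadratic penalty is that the two test functions $\varphi(r)=(r-s_\varepsilon)^2/2\varepsilon$ and $\phi(s)=-(r_\varepsilon-s)^2/2\varepsilon$ share the slope $p_\varepsilon:=(r_\varepsilon-s_\varepsilon)/\varepsilon$. Working with the formulation in terms of the operator $\mathcal{A}(f,u)$ of \eqref{infinoper2}, in which only the derivative enters through the test function while the nonlocal part retains the sub/supersolution itself, the supersolution property yields both $v(s_\varepsilon)\ge\mathcal{M}v(s_\varepsilon)$ and $\delta v(s_\varepsilon)-\mathcal{A}(\phi,v)(s_\varepsilon)-G(s_\varepsilon)\ge0$, whereas the subsolution property gives $\min\{\delta u(r_\varepsilon)-\mathcal{A}(\varphi,u)(r_\varepsilon)-G(r_\varepsilon),\,u(r_\varepsilon)-\mathcal{M}u(r_\varepsilon)\}\le0$; along a subsequence exactly one of the two terms is nonpositive for all small $\varepsilon$.

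If the first (differential) term is the nonpositive one, I would subtract the two scalar inequalities. The first-order contributions combine into $(c(r_\varepsilon)-c(s_\varepsilon))p_\varepsilon$, which is bounded by $L(r_\varepsilon-s_\varepsilon)^2/\varepsilon\to0$ using the Lipschitz constant $L$ of $c$; the running-reward terms contribute $G(r_\varepsilon)-G(s_\varepsilon)\to0$ by continuity. For the nonlocal terms I would use that shifting both arguments by the same $-\sigma$ preserves their difference, so that the penalization inequality $\Phi_\varepsilon(r_\varepsilon-\sigma,s_\varepsilon-\sigma)\le\Phi_\varepsilon(r_\varepsilon,s_\varepsilon)$ gives $u(r_\varepsilon-\sigma)-v(s_\varepsilon-\sigma)\le u(r_\varepsilon)-v(s_\varepsilon)$ on the common integration range; since $F$ is a probability measure this bounds the integral difference by $u(r_\varepsilon)-v(s_\varepsilon)$, up to an $o(1)$ term arising from the mismatch of the ranges $[0,r_\varepsilon-m]$ and $[0,s_\varepsilon-m]$, whose $F$-measure tends to $0$. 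Collecting everything, and noting that the terms $-\lambda u(r_\varepsilon)+\lambda v(s_\varepsilon)$ combine with $\delta$ into the coercive factor $(\delta+\lambda)$, I arrive at $(\delta+\lambda)(u(r_\varepsilon)-v(s_\varepsilon))\le\lambda(u(r_\varepsilon)-v(s_\varepsilon))+o(1)$, that is $\delta(u(r_\varepsilon)-v(s_\varepsilon))\le o(1)$. Letting $\varepsilon\to0$ yields $\delta M_0\le0$, contradicting $\delta,M_0>0$.

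The delicate case, and the main obstacle of the proof, is when the obstacle term is the nonpositive one, $u(r_\varepsilon)\le\mathcal{M}u(r_\varepsilon)$, since then $u(r_\varepsilon)-v(s_\varepsilon)\le\mathcal{M}u(r_\varepsilon)-\mathcal{M}v(s_\varepsilon)$ only reproduces $M_0\le M_0$ and yields no immediate contradiction. Here the positivity of the fixed cost $k$ is essential. Letting $\zeta^*_\varepsilon$ attain the supremum in $\mathcal{M}u(r_\varepsilon)$ from \eqref{interoper} (it exists because $u-H$ is upper semicontinuous on a compact set) and comparing against the shift $\eta=\zeta^*_\varepsilon+(r_\varepsilon-s_\varepsilon)$ in $\mathcal{M}v(s_\varepsilon)$, the monotonicity of $H$ gives that $r_\varepsilon+\zeta^*_\varepsilon$ is, in the limit, again a maximizer of $u-v$ but with $u(r_\varepsilon+\zeta^*_\varepsilon)\ge u(r_\varepsilon)+k$. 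To exploit this I would localize the doubling by adding a term $(r-r^*)^2$ to $\Phi_\varepsilon$, where $r^*$ maximizes $u$ over the compact set of maximizers of $u-v$; the obstacle alternative would then produce a maximizer on which $u$ exceeds $\max_{\{u-v=M_0\}}u$ by at least $k$, which is impossible since $u$ is bounded above. (The only configuration in which the shift $\eta$ fails to be admissible is $s_\varepsilon>r_\varepsilon$ with $\zeta^*_\varepsilon\to0$, and there upper semicontinuity of $u$ together with $k>0$ already forces $u(\bar r)\le u(\bar r)-k$, a direct contradiction.) Consequently the obstacle alternative is excluded at $r^*$, which forces the differential alternative and hence the contradiction obtained in the previous paragraph, completing the proof that $u\le v$ on $[m,M]$.
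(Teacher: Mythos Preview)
Your overall architecture is the standard one and your treatment of the differential alternative is fine, but your handling of the obstacle alternative differs from the paper and, as written, leaves a technical loose end.

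The paper does \emph{not} localize at a special maximizer $r^*$. Instead it perturbs the supersolution: with $\gamma\ge(k+\max G)/\delta$ it sets $v_n=(1-\tfrac1n)v+\tfrac{\gamma}{n}$ and checks that $v_n$ is a \emph{strict} supersolution in the sense that at any test point
\[
\min\{\delta\phi-\mathcal{A}(\phi,v_n)-G,\;v_n-\mathcal{M}v_n\}\ \ge\ \tfrac{k}{n}.
\]
With this margin the obstacle case is immediate: if $u(r_\alpha)\le\mathcal{M}u(r_\alpha)$ and $v_n(s_\alpha)\ge\mathcal{M}v_n(s_\alpha)+\tfrac{k}{n}$, choosing near-optimal $\zeta_\alpha$ and using continuity of $H$ gives $\kappa-\epsilon\le\kappa+\epsilon-\tfrac{k}{n}$, a contradiction for small $\epsilon$. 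No localization is needed, and the differential case is then handled exactly once, with the unmodified penalization.

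Your alternative---select $r^*\in\arg\max_{\{u-v=M_0\}}u$, add the localizer $(r-r^*)^2$ to $\Phi_\varepsilon$, and show that the obstacle alternative would produce $\hat r\in\{u-v=M_0\}$ with $u(\hat r)\ge u(r^*)+k$---is a legitimate idea and can be made to work. However, once you add $(r-r^*)^2$ you must \emph{rerun} the differential case with the new test functions, and your nonlocal estimate ``$\Phi_\varepsilon(r_\varepsilon-\sigma,s_\varepsilon-\sigma)\le\Phi_\varepsilon(r_\varepsilon,s_\varepsilon)$ hence $u(r_\varepsilon-\sigma)-v(s_\varepsilon-\sigma)\le u(r_\varepsilon)-v(s_\varepsilon)$'' no longer holds: the localizer contributes the extra term $(r_\varepsilon-\sigma-r^*)^2-(r_\varepsilon-r^*)^2=\sigma^2-2\sigma(r_\varepsilon-r^*)$, whose $F$-integral does \emph{not} vanish as $\varepsilon\to0$. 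This is easy to repair (either use a weight $\beta(r-r^*)^2$ and send $\beta\downarrow0$ after $\varepsilon\downarrow0$, or bound the nonlocal term via reverse Fatou using only the semicontinuity of $u$ and $v$, as the paper does), but as stated your ``hence the contradiction obtained in the previous paragraph'' is not justified. Also, the phrase ``impossible since $u$ is bounded above'' is not the right reason; the contradiction is that $\hat r$ lies in $\{u-v=M_0\}$ while $u(\hat r)$ exceeds $u(r^*)=\max_{\{u-v=M_0\}}u$.

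In short: the paper's strict-supersolution perturbation is the cleaner device here because it kills the obstacle alternative with a single inequality and leaves the differential case untouched; your localization route is viable but requires the extra bookkeeping above.
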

\begin{proof}
Let $\gamma\geq \dfrac{k+\max\limits_{m\leq r\leq M}G(r)}{\delta}$ and for $n\geq 1$ define $v_n(r)=\left(1-\dfrac{1}{n}\right)v(r)+\dfrac{\gamma}{n}$. Now, suppose $\phi_n-v_n\leq0=\phi_n(r_0)-v_n(r_0)$. Let $\phi=\left(\phi_n-\frac{\gamma}{n}\right)\frac{n}{n-1}$, so $\phi-v\leq0=\phi(r_0)-v(r_0)$. Therefore, the supersolution property of $v$ implies that
\begin{equation}\label{superk}
\min\left\{\delta \phi_n(r_0)-\mathcal{A}(\phi_n,v_n)(r_0)-G(r_0),v_n(r_0)-\mathcal{M}v_n(r_0)\right\}\geq\frac{k}{n}.
\end{equation}
Now, if $u\leq v_n$ for all $n$, then, taking $n\rightarrow\infty$ we get that $u\leq v$. Hence, suppose by contradiction that for some $n$, $\max\limits_{m\leq r\leq M} u(r)-v_n(r)=\kappa>0.$ Following the usual idea of doubling the number of variables as in \cite{crisli}, given $\alpha>0$ let
\begin{equation}\label{ka}
\kappa_{\alpha}=\max\limits_{[m,M]\times[m,M]}u(r)-v_n(s)-\frac{\alpha}{2}(r-s)^2=u(r_{\alpha})-v_n(s_{\alpha})-\frac{\alpha}{2}(r_{\alpha}-s_{\alpha})^2.
\end{equation}
Then, by Lemma 3.1 in \cite{crisli}, $\alpha(r_{\alpha}-s_{\alpha})^2\rightarrow0$ and $\kappa_{\alpha}\rightarrow\kappa$ as $\alpha\rightarrow\infty$. Also, for large $\alpha$, $(r_{\alpha},s_{\alpha})\in(m,M)\times(m,M)$. For each $\alpha$ we now define the following smooth functions:
\begin{equation}\label{varphi}
\varphi_{\alpha}(r)=v_n(s_{\alpha})+\frac{\alpha}{2}(r-s_{\alpha})^2+\kappa_{\alpha}
\end{equation}
and
\begin{equation}\label{phi}
\phi_{\alpha}(s)=u(r_{\alpha})-\frac{\alpha}{2}(r_{\alpha}-s)^2-\kappa_{\alpha}.
\end{equation}
Then $\phi_{\alpha}\leq v_n$ and $\phi_{\alpha}(s_{\alpha})=v_n(s_{\alpha})$, and $u\leq\varphi_{\alpha}$ and $\varphi_{\alpha}(r_{\alpha})=u(r_{\alpha})$. This implies that
$$\min\left\{\delta u(r_{\alpha})-\mathcal{A}(\varphi_{\alpha},u)(r_{\alpha})-G(r_{\alpha}),u(r_{\alpha})-\mathcal{M}u(r_\alpha)\right\}\leq0$$
and
$$\min\left\{\delta v_n(s_{\alpha})-\mathcal{A}(\phi_{\alpha},v_n)(s_{\alpha})-G(s_{\alpha}),v_n(s_{\alpha})-\mathcal{M}v_n(s_{\alpha})\right\}\geq\frac{k}{n}.$$
Suppose that there exists $\alpha_0$ such that for all $\alpha\geq\alpha_0$ 
$$\delta u(r_{\alpha})-\mathcal{A}(\varphi_{\alpha},u)(r_{\alpha})-G(r_{\alpha})>0,$$
therefore we must have that $u(r_{\alpha})-\mathcal{M}u(r_\alpha)\leq0$. So, for any $\epsilon>0$ we can find $\alpha$ big enough and $0\leq\zeta_{\alpha}\leq M-r_{\alpha}$ such that
\begin{align*}
\kappa-\epsilon&\leq u(r_{\alpha})-v_n(s_{\alpha})\\
&\leq u(r_{\alpha}+\zeta_{\alpha})-C(r_{\alpha},\zeta_{\alpha})-v_n(s_{\alpha}+\zeta_{\alpha})+C(s_{\alpha},\zeta_{\alpha})-\frac{k}{n}\\
&\leq \kappa +\epsilon-\frac{k}{n},
\end{align*}
where the last inequality follows from the continuity of $C$. By choosing $\epsilon$ small enough this is a contradiction since $k>0$ and therefore such $\alpha_0$ does not exist. This leads to the existence of a subsequence where
$$\delta u(r_{\alpha})-\mathcal{A}(\varphi_{\alpha},u)(r_{\alpha})-G(r_{\alpha})\leq0.$$
Now, note that $\varphi'_{\alpha}(r_{\alpha})=\phi'_{\alpha}(s_{\alpha})=\alpha(r_{\alpha}-s_{\alpha})$. Then, we have that
\begin{align*}
(\delta+\lambda)(u(r_{\alpha})-v_n(s_{\alpha}))&\leq \alpha(c(s_{\alpha})-c(r_{\alpha}))(r_{\alpha}-s_{\alpha})+G(r_{\alpha})-G(s_{\alpha})\\
&+\lambda\left(\int_0^{r_{\alpha}-m}u(r_{\alpha}-x)dF(x)-\int_0^{s_{\alpha}-m}v_n(s_{\alpha}-x)dF(x)\right)\\
&\leq L\alpha(r_{\alpha}-s_{\alpha})^2+G(r_{\alpha})-G(s_{\alpha})\\
&+\lambda\int_0^{\infty}\left(1_{[r_{\alpha}-m]}u(r_{\alpha}-x)-1_{[s_{\alpha}-m]}v_n(s_{\alpha}-x)\right)dF(x).
\end{align*}
Taking $\alpha\rightarrow\infty$ along the subsequence, by Bounded Convergence Theorem we get
$$(\delta+\lambda)\kappa\leq\lambda\kappa,$$
and obtain a contradiction since $\kappa>0$. This concludes the proof.
\end{proof}

This proposition produces directly the following uniqueness theorem:

 \begin{theorem}\label{uniimp}
There is at most one viscosity solution $f$ of \eqref{hjbimp} for each given boundary conditions $f(m)$ and $f(M)$. 
\end{theorem}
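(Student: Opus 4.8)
The plan is to deduce the theorem immediately from the comparison principle of Proposition \ref{uniqimp} by applying it twice. The key observation is that a viscosity solution of \eqref{hjbimp}, by part (iii) of Definition \ref{defviscosityimp}, is simultaneously a viscosity subsolution and a viscosity supersolution. This dual role is exactly what lets us play two candidate solutions against each other in both directions.

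Concretely, suppose $f_1$ and $f_2$ are both viscosity solutions of \eqref{hjbimp} sharing the same boundary data, so that $f_1(m)=f_2(m)$ and $f_1(M)=f_2(M)$. First I would regard $f_1$ as a subsolution and $f_2$ as a supersolution. The boundary equalities give in particular $f_1(m)\leq f_2(m)$ and $f_1(M)\leq f_2(M)$, so the hypotheses of Proposition \ref{uniqimp} are met and we conclude $f_1\leq f_2$ on $[m,M]$. Next I would reverse the roles, treating $f_2$ as the subsolution and $f_1$ as the supersolution; again the boundary conditions yield $f_2(m)\leq f_1(m)$ and $f_2(M)\leq f_1(M)$, and Proposition \ref{uniqimp} now gives $f_2\leq f_1$ on $[m,M]$.

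Combining the two inequalities yields $f_1=f_2$ on $[m,M]$, which is precisely the asserted uniqueness. There is no genuine obstacle at this stage: all the analytical difficulty, namely the doubling-of-variables argument and the estimates controlling the nonlocal integral term, has already been absorbed into the proof of the comparison principle. The only point requiring care is the symmetric application of that principle, and in particular noting that equality of boundary values supplies both of the one-sided inequalities $u(m)\leq v(m)$, $u(M)\leq v(M)$ needed in each of the two invocations.
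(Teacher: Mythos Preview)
Your proposal is correct and matches the paper's approach exactly: the paper states that Theorem \ref{uniimp} is produced directly by Proposition \ref{uniqimp}, and the standard way this direct implication works is precisely the symmetric double application of the comparison principle you describe.
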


\section{Optimal policy structure}\label{policy}

In this section we will describe the optimal maintenance policy structure by using the HJB equation \eqref{hjbimp}. We define the set $I=\{r\in[m,M]: V(r)=\mathcal{M}V(r)\}$, that is, the set of states where is optimal to intervene the system. Since $V$ and $\mathcal{M}V$ are continuous, the set $I$ is closed. We now define the following states:
\begin{equation}
\underline{s}=\min\{r:r\in I\}
\end{equation}
and
\begin{equation}
\overline{s}=\max\{r:r\in I\}.
\end{equation}
If $I$ is empty we define $\underline{s}=\overline{s}=M$. We also define the state $S$ as the smallest such that
\begin{align}\label{defS}
\nonumber
V(\overline{s})&=\max_{0\leq\zeta\leq M-\overline{s}}V(\overline{s}+\zeta)-H(\overline{s}+\zeta)+H(\overline{s})-k\\
&=V(S)-H(S)+H(\overline{s})-k\\
\nonumber
&=\mathcal{M}V(\overline{s}).
\end{align}
Note that $\overline{s}<S\leq M$ since $k>0$. Finally, we define the maintenance policy $\nu^*$ as follows: If the system is below $\underline{s}$ do nothing, if the system is in the interval $[\underline{s},\overline{s}]$ bring it to the state $S$ and if the system is above $\overline{s}$ do nothing. This novel policy structure has an interesting set of states, $[m,\underline{s})$ for which is not benefit-cost effective to do nay maintenance, that we call let-it-die states.

\begin{theorem} The $(\underline{s},\overline{s},S)$ policy $\nu^*$ is optimal.
\end{theorem}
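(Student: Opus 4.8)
The plan is to prove the two inequalities $J(r,\nu^*)\le V(r)$ and $J(r,\nu^*)\ge V(r)$. The first is immediate from the definition \eqref{valimp} of $V$ as a supremum, once $\nu^*$ is checked to be admissible: its intervention times are the successive times at which the controlled process reaches $[\underline{s},\overline{s}]$, which are stopping times, the impulses $\zeta_i=S-R^{\nu^*}_{\tau_i}$ are $\mathcal{F}_{\tau_i}$-measurable, and since every post-intervention excursion starts at $S>\overline{s}$ and must deteriorate at rate at most $\max_{[m,M]}c$ before it can re-enter $[\underline{s},\overline{s}]$, consecutive interventions are separated by a fixed positive time, so $\tau_i\to\infty$. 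Writing $\Phi:=J(\cdot,\nu^*)$, it then suffices to show $V\le\Phi$.

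The backbone will be a maximum-principle comparison of $V$ and $\Phi$ on the three regions cut out by the policy. On the let-it-die region $[m,\underline{s})$ the process only decreases and never re-enters $[\underline{s},\overline{s}]$, so $\nu^*$ never intervenes; since $[m,\underline{s})\subset\{V>\mathcal{M}V\}$, Lemma \ref{lema} holds with equality up to $\tau^0$ and gives $V=\Phi$ there. On $(\overline{s},M]\subset\{V>\mathcal{M}V\}$ neither $V$ nor $\Phi$ intervenes before the first time $\theta$ the process reaches $[m,\overline{s}]$, so both satisfy the same renewal identity and, the running-profit terms being identical, subtraction yields for $r\in(\overline{s},M]$ the identity $(V-\Phi)(r)=\mathbb{E}_r[1_{\{\theta<\tau^0\}}e^{-\delta\theta}(V-\Phi)(R^r_\theta)]$ with $R^r_\theta\in[m,\overline{s}]$. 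Since $V\ge\Phi$ and $\theta>0$ a.s.\ for $r>\overline{s}$, the expected discount factor is strictly below $1$, so $(V-\Phi)(r)<\max(V-\Phi)$ whenever that maximum is positive.

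The crux is the behaviour on $[\underline{s},\overline{s}]$, where $\nu^*$ jumps to $S$ and hence $\Phi(r)=\Phi(S)-H(S)+H(r)-k$. I claim $V(r)=V(S)-H(S)+H(r)-k$ for every $r\in[\underline{s},\overline{s}]$; equivalently, with $g:=V-H$, that $g$ is constant equal to $g(\overline{s})=g(S)-k$ on $[\underline{s},\overline{s}]$. The inequality $g\ge g(S)-k$ is free, because jumping from $r$ to $S$ is admissible, so $V(r)\ge\mathcal{M}V(r)\ge V(S)-H(S)+H(r)-k$. For the reverse I would take a maximiser $r_0$ of $g$ over $[\underline{s},\overline{s}]$ and suppose $g(r_0)>g(\overline{s})$. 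If $r_0\in I$ then $g(r_0)=\max_{y\ge r_0}g(y)-k$; comparing $g(r_0)$ with $g(S)=\max_{[\overline{s},M]}g$ and using $k>0$ forces a contradiction in both cases, since $g(r_0)\ge g(S)$ gives $g(r_0)=g(r_0)-k$ while $g(r_0)<g(S)$ gives $g(r_0)=g(S)-k=g(\overline{s})$. As $\underline{s},\overline{s}\in I$, the maximiser must therefore lie in the open continuation region $\{V>\mathcal{M}V\}\cap(\underline{s},\overline{s})$, and the contradiction has to be extracted there.

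The step I expect to be the main obstacle is exactly this last case: at an interior maximiser $r_0$ of $g=V-H$ in the continuation region, $V$ is touched from above by $H(\cdot)+\mathrm{const}$, and one must push the viscosity subsolution inequality for $\delta V-\mathcal{A}V-G$ through the nonlocal term $\lambda\int_0^{r_0-m}V(r_0-s)\,dF(s)$ to contradict maximality, using only the continuity of $V$ and of $\mathcal{M}V$ (Remark \ref{remcont}) rather than differentiability. Once this structural identity is in hand the proof closes at once: on $[\underline{s},\overline{s}]$ one gets $(V-\Phi)(r)=(V-\Phi)(S)$ with $S\in(\overline{s},M]$, so there $V-\Phi$ takes the same value it takes at $S$, which by the second paragraph is strictly below $\max(V-\Phi)$ when the latter is positive. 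Since $V-\Phi=0$ on $[m,\underline{s})$ and $V-\Phi<\max(V-\Phi)$ on $(\overline{s},M]\cup[\underline{s},\overline{s}]$, the continuous function $V-\Phi$ cannot attain a positive maximum on the compact $[m,M]$; hence $V\le\Phi$, and with $\Phi\le V$ this gives $J(\cdot,\nu^*)=V$, proving that $\nu^*$ is optimal.
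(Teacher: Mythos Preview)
Your overall architecture (maximum principle for $V-\Phi$ across the three regions) is reasonable and genuinely different from the paper's proof, but the argument has a real gap at exactly the place you flag: you never establish the ``structural identity'' that $g:=V-H$ is constant on $[\underline{s},\overline{s}]$. Your treatment of the case $r_0\in I$ is fine, but for $r_0\in(\underline{s},\overline{s})\setminus I$ you only sketch a plan. That plan is problematic for two reasons. First, $H$ is assumed merely continuous and increasing, so $H(\cdot)+\text{const}$ need not be an admissible $C^1$ test function. Second, even if $H\in C^1$, the viscosity subsolution inequality at $r_0$ with test function $\varphi=H+g(r_0)$ only yields $\delta\varphi(r_0)-\mathcal{A}\varphi(r_0)-G(r_0)\le 0$ (or the mixed version with $V$ in the integral), which is a relation among the data $H,G,c,F$ and does not by itself contradict maximality of $g$ at $r_0$; you also only know $V\le\varphi$ on $[\underline{s},\overline{s}]$, not globally, so $r_0$ is not a global maximiser of $V-\varphi$ as Definition~\ref{defviscosityimp} requires. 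Without this step your final paragraph does not go through, since the equality $(V-\Phi)(r)=(V-\Phi)(S)$ on $[\underline{s},\overline{s}]$ rests precisely on $g$ being constant there.

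The paper avoids viscosity entirely here and uses a short algebraic route you might prefer. From $V\ge\mathcal{M}V$ one gets, for any $r'\le r$ with $r\in I$,
\[
g(r')\ \ge\ \max_{r'\le y\le M}g(y)-k\ \ge\ \max_{r\le y\le M}g(y)-k\ =\ g(r),
\]
so $g$ is nonincreasing as one moves left from any point of $I$. Since $V(\underline{s})=J(\underline{s},\nu^*)$ (by continuity from the let-it-die region, which you already have) and $\Phi(r)=\Phi(S)-H(S)+H(r)-k$ on $[\underline{s},\overline{s}]$, a two-line manipulation gives $g(\underline{s})=g(\overline{s})=g(S)-k$ and $V(S)=\Phi(S)$; the monotonicity then forces $g(r)=g(S)-k$ for every $r\in I$. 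Finally, to show $I=[\underline{s},\overline{s}]$ (i.e.\ the open set $[\underline{s},\overline{s}]\setminus I$ is empty) the paper uses the dynamic programming identity of Lemma~\ref{lema} at $S$ together with the standing assumption that the support of $F$ contains $[0,M-m]$: if the gap were nonempty, starting from $S$ the process would land in it with positive probability at the first exit below $\overline{s}$, giving $V(S)>J(S,\nu^*)=V(S)$. Your maximum-principle framework can be completed by importing these two ingredients in place of the viscosity step.
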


\begin{proof}
We are going to show that $V(r)=J(r,\nu^*)$ for all $r\in[m,M]$. First of all, by definition of $I$ we have that if $r\in[m,\underline{s})$ then $J(r,\nu^*)=V(r)$ and by continuity of both functions we get $J(\underline{s},\nu^*)=V(\underline{s})$. Now, for $r\in[\underline{s},\overline{s}]$ we know that
$$J(r,\nu^*)=J(S,\nu^*)-H(S)+H(r)-k,$$
in particular for $\underline{s}$  we get
$$J(S,\nu^*)-k=J(\underline{s},\nu^*)+H(S)-H(\underline{s}).$$
Combining,
\begin{equation}\label{aux1}
J(r,\nu^*)=J(\underline{s},\nu^*)+H(r)-H(\underline{s})=V(\underline{s})+H(r)-H(\underline{s}).
\end{equation}
Since $J(\overline{s},\nu^*)\leq V(\overline{s})$, by \eqref{defS} and \eqref{aux1}
\begin{align*}
V(\underline{s})&\leq V(S)-H(S)+H(\underline{s})-k\\
&\leq \mathcal{M}V(\underline{s})=V(\underline{s}),
\end{align*}
hence 
\begin{equation}\label{equalf}
V(\underline{s})-H(\underline{s})=V(\overline{s})-H(\overline{s})=V(S)-H(S)-k
\end{equation}
and also 
\begin{equation}\label{equalS}
V(S)=J(S,\nu^*).
\end{equation} 
Suppose now that $r'\leq r$ with $r\in I$, then
\begin{align*}
V(r')-H(r')&\geq \max_{0\leq\zeta\leq M-r'}V(r'+\zeta)-H(r'+\zeta)-k\\
&=\max_{r'\leq\zeta\leq M}V(\zeta)-H(\zeta)-k\\
&\geq \max_{r\leq\zeta\leq M}V(\zeta)-H(\zeta)-k\\
&=V(r)-H(r).
\end{align*}
Therefore, for $r\in I$ such that $\underline{s}<r<\overline{s}$ from \eqref{equalf} we get that $V(r)-H(r)=V(S)-H(S)-k$ and by \eqref{equalS}
\begin{equation}
V(r)=J(S,\nu^*)-H(S)+H(r)-k=J(r,\nu^*).
\end{equation}
It only remains to show that $I=[\underline{s},\overline{s}]$. If $r\in[\underline{s},\overline{s}]\setminus I$ then $V(r)>J(r,\nu^*)$, so let $\tau^*=\inf\{t\geq0:R^S_t\leq\overline{s}\}$ and by Lemma \ref{lema}
\begin{align*}
V(S)&=\mathbb{E}_S\left[\int_0^{\tau^*}e^{-\delta s}G(R^S_s)ds+e^{-\delta\tau^*}V(R^S_{\tau})\right]\\
&=\mathbb{E}_S\left[\int_0^{\tau^*}e^{-\delta s}G(R^S_s)ds+e^{-\delta\tau^*}\left(1_{\{R^S_{\tau^*}\in I\cup[m,\underline{s})\}}J(R^S_{\tau^*},\nu^*)+1_{\{R^S_{\tau^*}\in[\underline{s},\overline{s}]\setminus I\}}V(R^S_{\tau^*})\right)\right]\\
&>\mathbb{E}_S\left[\int_0^{\tau^*}e^{-\delta s}G(R^S_s)ds+e^{-\delta\tau^*}J(R^S_{\tau},\nu^*)\right]\\
&=J(S,\nu^*)=V(S).
\end{align*}
Since the support of $F$ contains the interval $[0,M-m]$, this contradiction implies that the open set $[\underline{s},\overline{s}]\setminus I=\emptyset$ and this concludes the proof.
\end{proof}

\section{Numerical examples}\label{numer}

In this section we present some numerical examples to illustrate the results of the paper and perform a sensitivity analysis of the fixed cost $k$. Let us start by denoting $V_k$ and $C_k$ the value function and the cost function with fixed cost $k$, so we have the following convergence result.

\begin{proposition}\label{limit}
$V_k$ converges point-wise to $V_0$ as $k$ approaches to 0.
\end{proposition}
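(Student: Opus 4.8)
The plan is to show that $\bar V(r):=\lim_{k\to0^{+}}V_k(r)$ exists and then to establish the two inequalities $\bar V(r)\le V_0(r)$ and $\bar V(r)\ge V_0(r)$. The first is immediate from monotonicity in $k$. For a fixed admissible $\nu$ one has
$$J_k(r,\nu)=J_0(r,\nu)-k\,\mathbb{E}_r\!\left[\sum_{\tau_i<\tau^\nu}e^{-\delta\tau_i}\right],$$
so $k\mapsto J_k(r,\nu)$ is non-increasing and $J_k(r,\nu)\le J_0(r,\nu)\le V_0(r)$. Taking the supremum over $\nu$ shows that $k\mapsto V_k(r)$ is non-increasing and bounded above by $V_0(r)$, whence $\bar V(r)=\sup_{k>0}V_k(r)$ exists and satisfies $\bar V(r)\le V_0(r)$.

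The substantive part is the reverse inequality, which rests on approximating $V_0$ by policies that intervene only finitely many times. I would fix $r$ and $\epsilon>0$ and choose $\nu=\{(\tau_i,\zeta_i)\}$ with $J_0(r,\nu)>V_0(r)-\epsilon/3$. Let $\nu^N$ be the policy that performs the first $N$ interventions of $\nu$ and does nothing afterwards. The key step is the one-sided truncation estimate
$$J_0(r,\nu^N)\ge J_0(r,\nu)-\frac{\|G\|_\infty}{\delta}\,\mathbb{E}_r\!\left[e^{-\delta\tau_{N+1}}\right].$$
To obtain it, note that the controlled paths $R^{\nu}$ and $R^{\nu^N}$ coincide on $[0,\tau_{N+1})$, so conditioning at the stopping time $\tau_{N+1}$ via the strong Markov property, the two functionals differ only through their continuation values from $\tau_{N+1}$ on the event $\{\tau^\nu>\tau_{N+1}\}$. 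The continuation value of $\nu^N$ is non-negative (it does nothing and collects only the non-negative running benefit), while that of $\nu$ is bounded above by $V_0(R^{\nu}_{\tau_{N+1}^-})\le\|G\|_\infty/\delta$; subtracting and discounting by $e^{-\delta\tau_{N+1}}$ gives the displayed bound. Since $\tau_{N+1}\to\infty$ a.s. by the last requirement in the definition of an impulse control, and $e^{-\delta\tau_{N+1}}\le1$, bounded convergence yields $\mathbb{E}_r[e^{-\delta\tau_{N+1}}]\to0$, so I may fix $N$ with $J_0(r,\nu^N)>V_0(r)-2\epsilon/3$.

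For this fixed $\nu^N$ there are at most $N$ interventions and each discount factor is at most $1$, so $\sum_{\tau_i<\tau^{\nu^N}}e^{-\delta\tau_i}\le N$ and therefore $J_k(r,\nu^N)\ge J_0(r,\nu^N)-kN$. Choosing $k<\epsilon/(3N)$ gives $V_k(r)\ge J_k(r,\nu^N)>V_0(r)-\epsilon$, and letting $\epsilon\to0$ together with the monotonicity above yields $\bar V(r)=V_0(r)$.

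The main obstacle is the truncation estimate: one must identify the continuation values after $\tau_{N+1}$ and bound them uniformly, and here the boundedness of $V_0$ by $\|G\|_\infty/\delta$ and the a.s. divergence $\tau_{N+1}\to\infty$ are precisely what force the discounted tail to vanish. I note in passing that the alternative of passing to the limit in the viscosity sense is awkward, since the comparison principle of Proposition \ref{uniqimp} uses $k>0$ and hence does not give uniqueness for the $k=0$ equation; this is why I favour the direct probabilistic argument.
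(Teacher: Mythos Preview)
Your proof is correct and follows essentially the same route as the paper's: both show $V_k$ is monotone in $k$ with limit bounded above by $V_0$, then approximate $V_0$ by a policy with finitely many interventions so that the discrepancy $J_0-J_k\le kN$ can be made small. The only difference is that the paper obtains the finite-intervention approximation by citing Lemma~7.1 of \cite{oeksendal05acjd}, whereas you supply a direct truncation estimate $J_0(r,\nu^N)\ge J_0(r,\nu)-\tfrac{\|G\|_\infty}{\delta}\mathbb{E}_r[e^{-\delta\tau_{N+1}}]$; this makes your argument self-contained but is otherwise the same idea.
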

\begin{proof}
First, it is clear that $V_k$ is an increasing sequence of functions as $k\downarrow0$, with upper bound $V_0$. Let $\Upsilon_m$ the set of admissible strategies with at most $m$ interventions, hence by Lemma 7.1 in \cite{oeksendal05acjd}, for any $\epsilon>0$, there exist $m\geq0$ and $\nu\in\Upsilon_m$ such that
\begin{align*}
V_0(r)&\leq\mathbb{E}_{r}\left[\int_0^{\tau^{\nu}}e^{-\delta s}G(R_s^{\nu})ds-\sum\limits_{\tau_i<\tau^{\nu}}e^{-\delta\tau_i}C_0(R^{\nu}_{\tau_i},\zeta_i)\right]+\epsilon\\
&=\mathbb{E}_{r}\left[\int_0^{\tau^{\nu}}e^{-\delta s}G(R_s^{\nu})ds-\sum\limits_{\tau_i<\tau^{\nu}}e^{-\delta\tau_i}C_k(R^{\nu}_{\tau_i},\zeta_i)\right]+k\mathbb{E}\left[\sum\limits_{\tau_i<\tau^{\nu}}e^{-\delta\tau_i}\right]+\epsilon\\
&\leq V_k(r)+km+\epsilon.
\end{align*}
Then, for any $k\leq\frac{\epsilon}{m}$ we have the result since $\epsilon$ is arbitrary.
\end{proof}

To approximate the value function $V_k$ we follow the Jacobi value function-iteration method described in \cite{Kushner} (see also \cite{Junca} for more details) until an error less that $10^{-7}$ is reached. We will present three examples with the same data except for the cost function $H$. The system performance is measured as a percentage of the optimal level, so $m=0$ and $M=1$. Shocks are assumed to arrive with intensity $\lambda=0.5$ and size distributed Lognormal(-1,0.14). The progressive deterioration is described by the function $c(r)=0.05(1+\epsilon-r)$ with $\epsilon=10^{-6}$ (remember that $c>0$ so we need to add this $\epsilon$). The benefit function is defined as $G(r)=C(1-e^{\frac{r}{2}})$ with $C$ varying for each example. Finally, the discount factor is $\delta=0.05$.

\begin{figure}[t]
   \begin{subfigure}[b]{.5\linewidth}
      \includegraphics[width=1.1\textwidth]{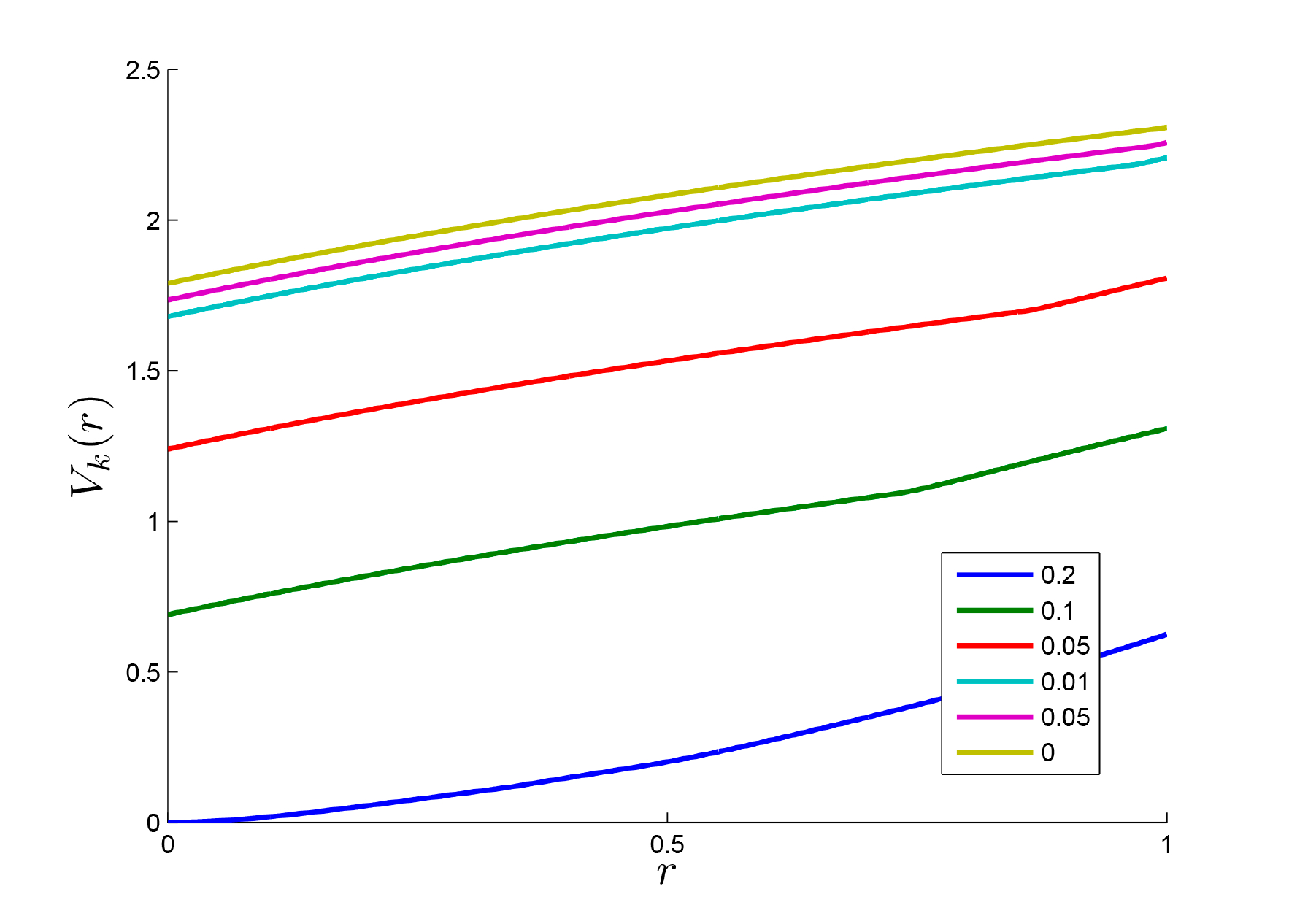}
   \end{subfigure}
\begin{subfigure}[b]{.5\linewidth}
      \includegraphics[width=1.1\textwidth]{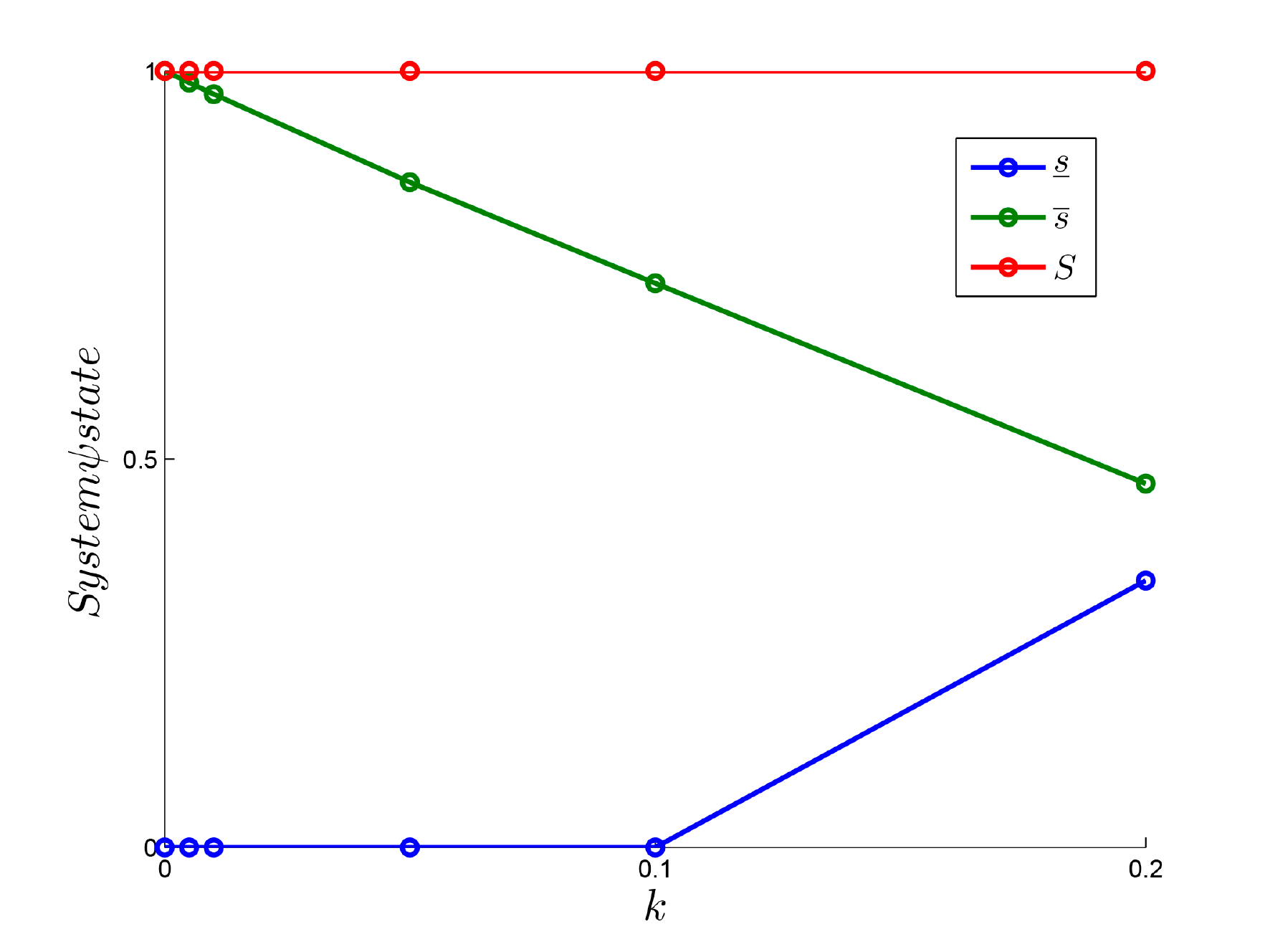}
   \end{subfigure}
      \caption{Value function and $(\underline{s},\overline{s},S)$ optimal policy with $H(r)=\sqrt{r+0.5}$ and $C=0.5$}\label{figraiz}
\end{figure}

Figure \ref{figraiz} shows the value function $V_k$ for different values of $k$ and maintenance cost $H(r)=\sqrt{r+0.5}$. It also shows the values of the optimal $(\underline{s}_k,\overline{s}_k,S_k)$ policy for each value of $k$. The first thing we can notice is the linear behavior of $\overline{s}_k$. We also see that $S_k=1$ for all $k$, so is optimal to bring the system to perfect condition at each maintenance. Now, note that as $k$ goes to 0, the optimal policy becomes an always-repair policy since $\underline{s}_k$ goes to 0 and $\overline{s}_k$ goes to 1. This implies that for all $r\in[0,1]$ we have that $V_0(r)=V_0(1)-H(1)+H(r)$, that is, $V_0$ is just the cost function plus a constant term.

\begin{figure}[h]
   \begin{subfigure}[b]{.5\linewidth}
      \includegraphics[width=1.1\textwidth]{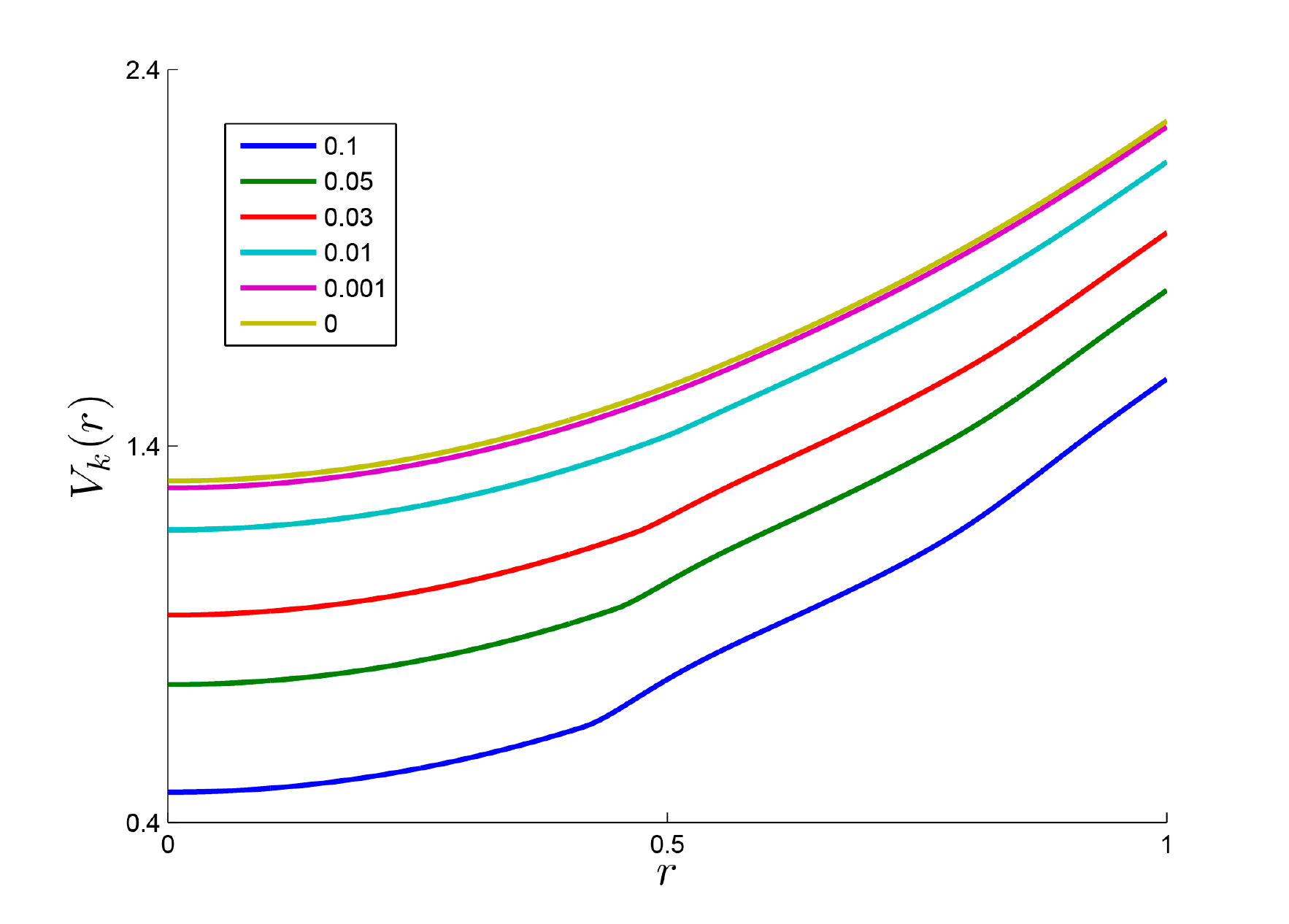}
   \end{subfigure}
\begin{subfigure}[b]{.5\linewidth}
      \includegraphics[width=1.1\textwidth]{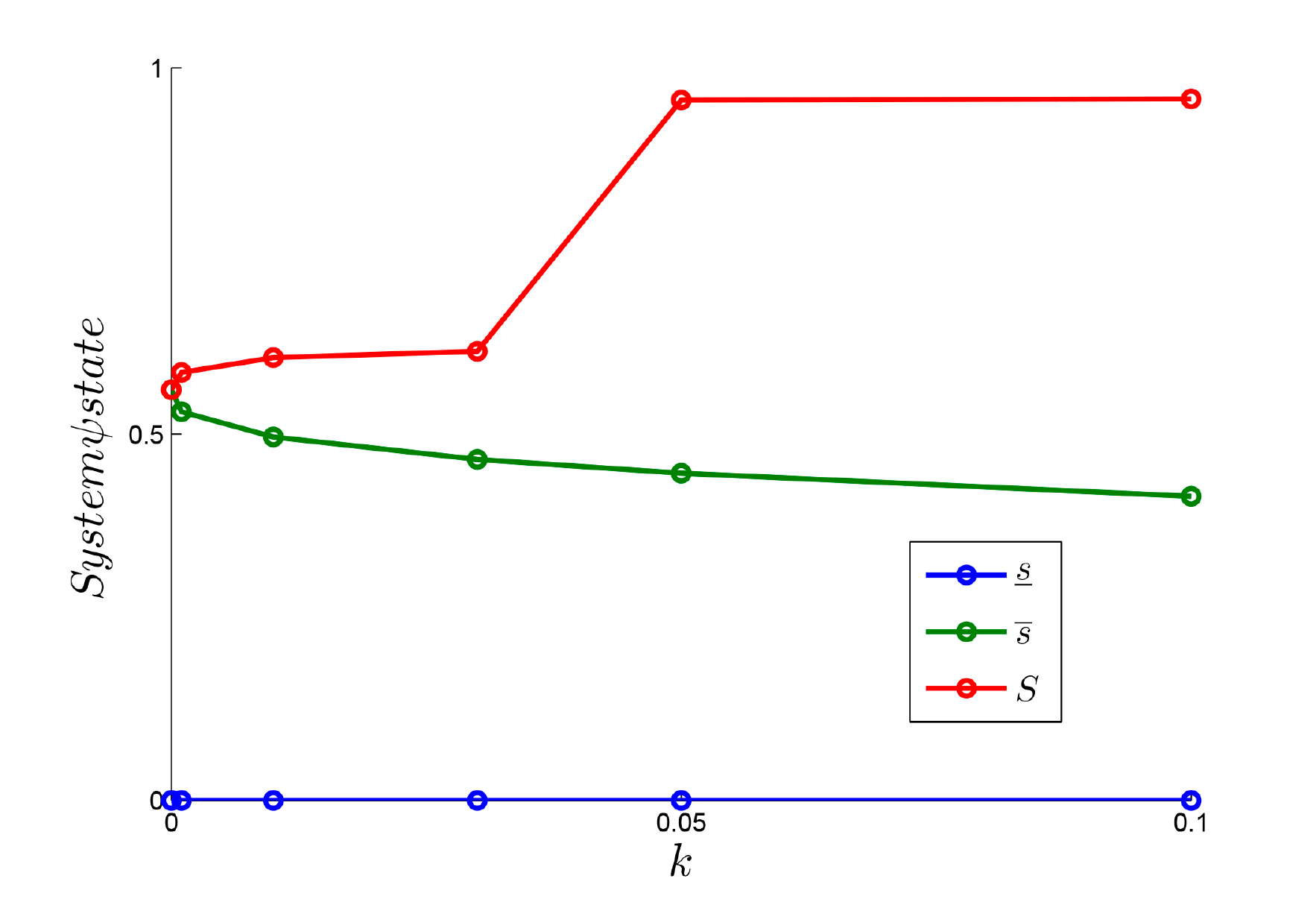}
   \end{subfigure}
      \caption{Value function and $(\underline{s},\overline{s},S)$ optimal policy with $H(r)=r^2$ and $C=1$}\label{figcuad}
\end{figure}

Now, when $H(r)=r^2$ and $C=1$, the results are as shown in Figure \ref{figcuad}. In this case $H$ is a convex function increasing rapidly for $r$ close to 1, so for big $k$ is optimal to bring the system to optimal conditions, but as $k$ decreases we can afford more frequent repairs so $S_k$ also decreases. The interesting fact is that it seems to be a threshold for which $S_k$ jumps from 1 to a much smaller value.

\begin{figure}[h]
   \begin{subfigure}[b]{.5\linewidth}
      \includegraphics[width=1.1\textwidth]{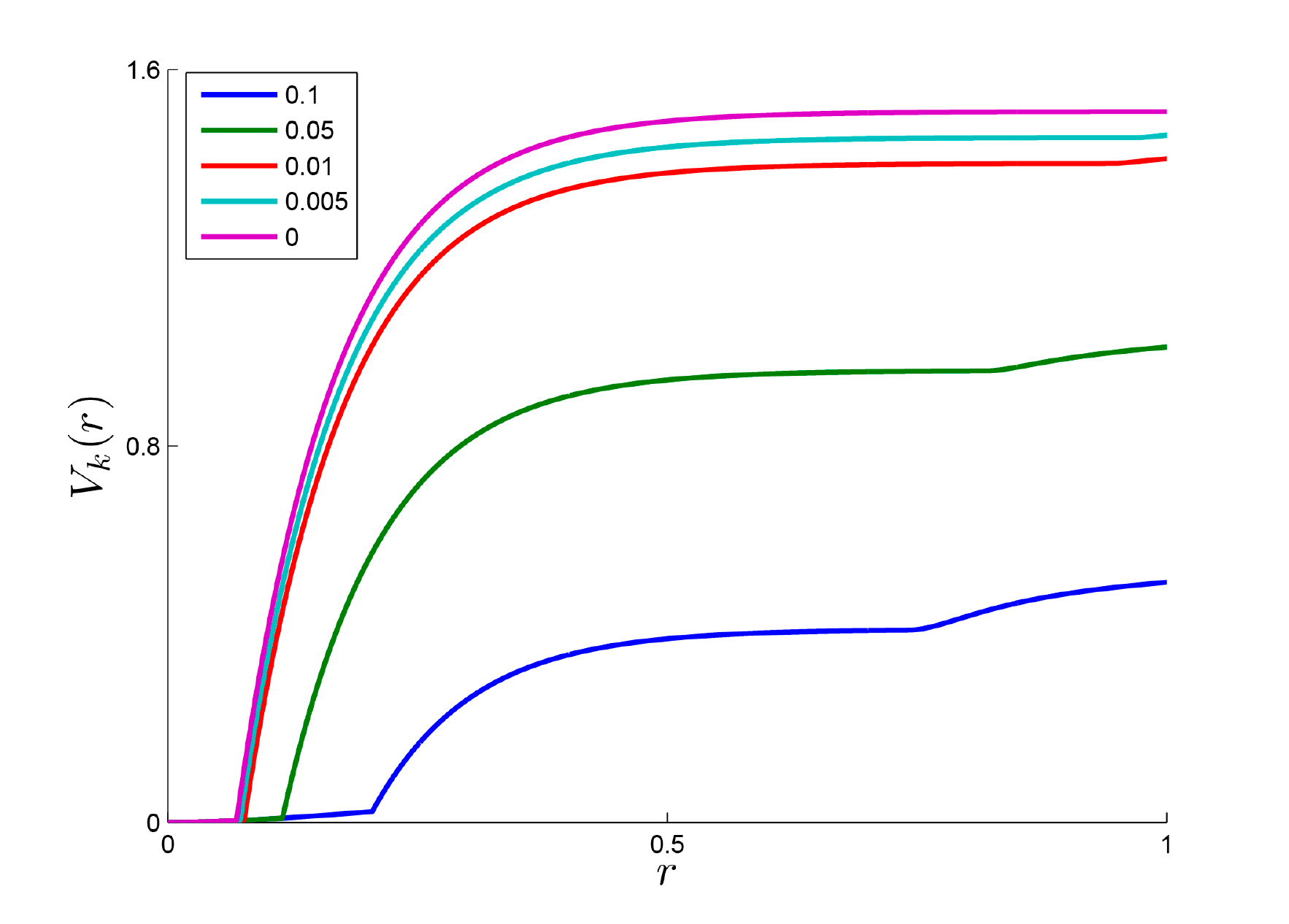}
   \end{subfigure}
\begin{subfigure}[b]{.5\linewidth}
      \includegraphics[width=1.1\textwidth]{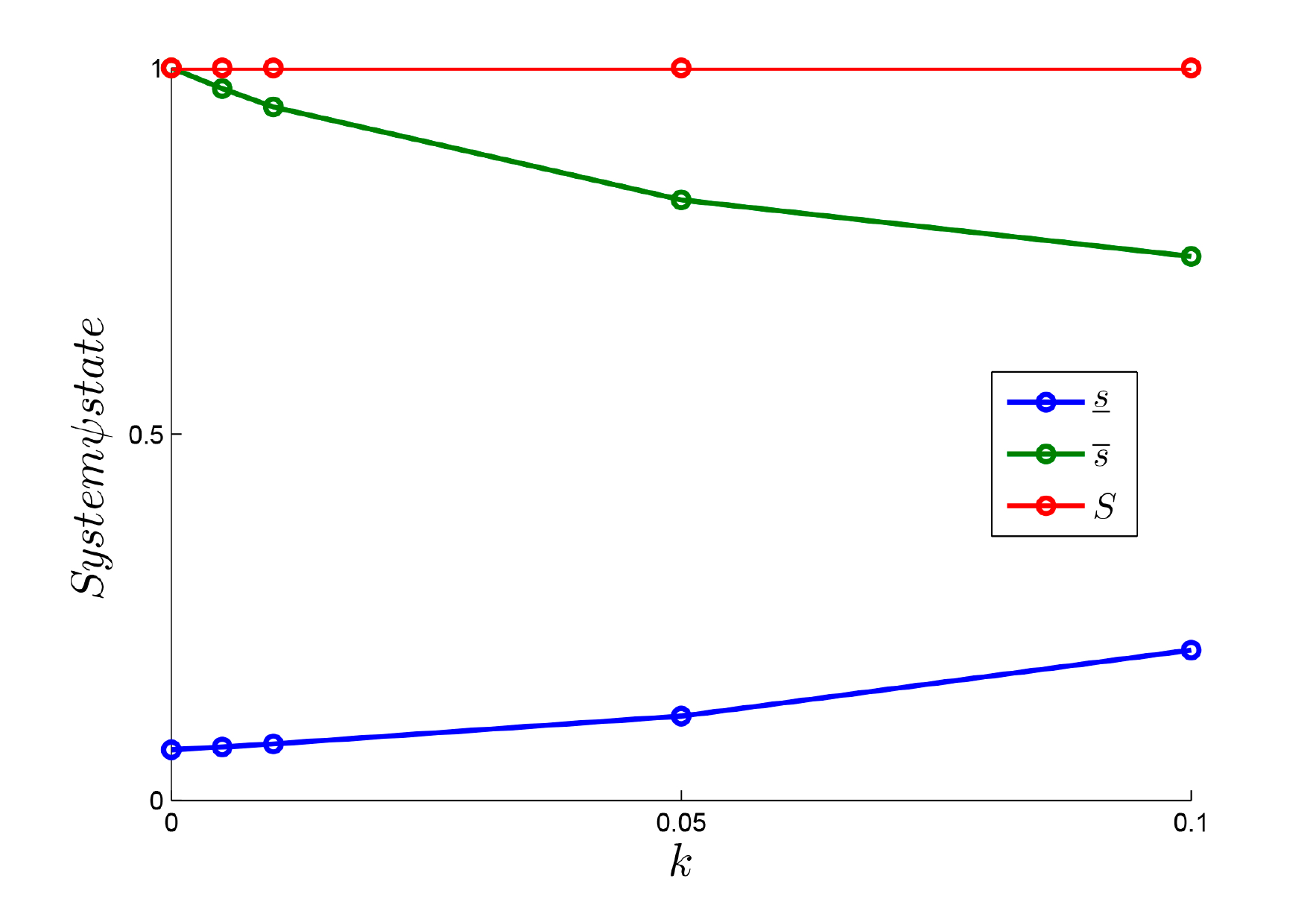}
   \end{subfigure}
      \caption{Value function and $(\underline{s},\overline{s},S)$ optimal policy with $H(r)=3(1-e^{-10r})$ and $C=0.2$}\label{figexp}
\end{figure}

Finally, Figure \ref{figexp} shows that when $H(r)=3(1-e^{-10r})$ and $C=0.2$ we have that $\underline{s}_k$ is bigger than zero, so there are some valuer of $r$ for which is optimal to do nothing and let the system failure. This is explained because of the very rapidly increasing values of $H$ for small values of $r$. We also see in this case that $V_k$ is clearly not in $C^1$ for any $k$ as opposed to the previous cases where the values function becomes smoother when $k$ decreases.

\section{Conclusions and future work}\label{future}

In this work we extend some previous works of \cite{Junca,Junca20131} to include progressive deterioration in a system subject to shocks that is maintained using impulse control strategies with fixed cost. We use the theory of viscosity solution to characterize the value function as the unique solution of the HJB equation associated with the control problem. The main result is the proof that optimal strategies in this model always has an $(\underline{s},\overline{s},S)$ structure with a set of let-it-die states.

Several questions remain still open. In the case of no fixed cost, by Remark \ref{k0} we know that $V_0$ is a viscosity solution of the HJB equation but we will not able to characterize it with this equation. In this case we have that $$\mathcal{M}_0f(r):=\sup_{0\leq\zeta\leq M-r}f(r+\zeta)-H(r+\zeta)+H(r)\geq f(r),$$
for any measurable function $f$. On the other hand, we must have that $V_0\geq\mathcal{M}_0V_0$. Therefore, $V_0=\mathcal{M}_0V_0$ and this shows that in fact the HJB equation \eqref{hjbimp} does not provide enough information to characterize $V_0$. Also, $\zeta=0$ attains the maximum value in the computation of $\mathcal{M}_0V_0(r)$ for all $r\in[m,M]$, so it is always optimal to intervene the system but at the same time do nothing is optimal, which is a contradiction. Another consequence is that we are not able to use this condition in order to find an optimal policy for this case, so further information is needed.

Following the same idea as in \cite{Junca11}, assume that $H$ and $V_0$ are differentiable functions. Then for all $r\in(m,M)$
\begin{equation}\label{regcond}
0\geq\left.V_0'(r+\zeta)-H'(r+\zeta)\right|_{\zeta=0}=V_0'(r)-H'(r).
\end{equation}

Hence, we get that $V_0$ must satisfy the condition $h-V_0'\geq0$, where we define $h=H'$. Since $H$ is increasing then $h$ is a positive function. This suggests that if we assume no fixed cost we should look at an equation different of \eqref{hjbimp} and consider the HJB equation
\begin{equation}\label{hjbsing}
\min\left\{\delta f(r)-\mathcal{A}f(r)-G(r),h(r)-f'(r)\right\}=0.
\end{equation}
In fact, the equation \eqref{hjbsing} is the associated equation of the following singular control problem (see \cite{Taksar}). In this case the \emph{admissible} controls $\xi_t$ are predictable, non-decreasing and c\`agl\`ad processes with $\xi_{0}=0$, such that the process
$$R_t^{\xi}=r-\int_0^tc(R_s^{\xi})ds-\sum\limits_{i=1}^{N_t}S_i+\xi_t$$
stays in the interval $[m,M]$. We define the total failure time $\tau^{\xi}$ analogously as above. Now, the form of the value function $V_s$ changes to
\begin{align}\label{valsing}
V_s(r)&=\sup_{\xi}J(r,\xi)\\\nonumber
&=\sup_{\xi}\mathbb{E}_r\left[\int_{0}^{\tau^{\xi}}e^{-\delta s}G(R_s^{\xi})ds-\int_{0}^{\tau^{\xi}}e^{-\delta s}h(R_{s}^{\xi})d\xi^c_s-\sum\limits_{s<\tau^{\xi}\atop \xi_{s+}\neq\xi_s}e^{-\delta s}(H(R_{s+}^{\xi})-H(R_{s}^{\xi}))\right]
\end{align}
for all $r\in[m,M]$, where $\xi^c$ is the continuous part of the control process. Note that the last term in \eqref{valsing} is the same as in the impulse control problem when $k=0$. As before, $V_s$ is a bounded and non-negative function. Finally,  it is clear that $V_0\leq V_s$ since any admissible impulse control is also an admissible singular control. So, the first question is if both are in fact the same. For instance, the first numerical example in the previous section shows that $V_0$ is just the cost function plus a constant term, therefore $V_0'-h=0$. Since $V_0$ satisfies \eqref{hjbimp} we conclude that $V_0$ also satisfies \eqref{hjbsing}.

Now, the figures above show that the sequences $\{\underline{s}_k\}$ and $\{S_k\}$ are non-increasing while the sequence $\{\overline{s}_k\}$ is non-decreasing. Is this true in general? If so, they will be convergent with $\underline{s}_k\rightarrow \underline{s}$ and $\overline{s}_k\rightarrow\overline{s}\leftarrow S_k$. Is $(\underline{s},\overline{s})$ an optimal policy for the problem with no fixed cost? Can this policy be extracted from the HJB equation \eqref{hjbsing}? If the answer is yes, then we will also have an atypical set of let-it-die states given by $[m,\underline{s})$. This situation occurs for example in the optimal dividends payment problem for insurance companies in some cases, see \cite{Azcue}.

Another direction for future research is to consider a model that is not permanently observed but only at discrete times (which can also be part of the decision).

\appendix

\section{Proof of Lemma \ref{lema}}\label{applema}
\begin{proof}
Given $\tau$ a stopping time with respect to the filtration $\mathcal{F}_t$, let $\mathcal{I}_{\tau}$ the set of admissible controls with $\tau_1\geq\tau$ a.s.

Let $r\in[m,M]$ and $\nu\in\mathcal{I}_{\tau}$, then 
\begin{equation}\label{lemadyn}
V(r)\geq J(r,\nu)=\mathbb{E}_r\left[\int_0^{\tau\wedge\tau^0}e^{-\delta s}G(R^r_s)ds+1_{\{\tau<\tau^0\}}e^{-\delta\tau}\eta^{\nu} \right],
\end{equation}
where
$$\eta^{\nu}=\int_0^{\tau^{\nu}-\tau}e^{-\delta s}G(R_s^{\nu})ds-\sum\limits_{\tau_i<\tau^{\nu}}e^{-\delta(\tau_i-\tau)}C(R^{\nu}_{\tau_i},\zeta_i).$$

From the strong Markov property of the process $R_s$ (see \cite{rogerswilliams}) we have that
\begin{align*}
\mathbb{E}_r\left[1_{\{\tau<\tau^0\}}e^{-\delta\tau}\eta^{\nu} \right]&=\mathbb{E}_r\left[1_{\{\tau<\tau^0\}}e^{-\delta\tau}\mathbb{E}[\eta^{\nu}|\mathcal{F}_{\tau}] \right]\\
&=\mathbb{E}_r\left[1_{\{\tau<\tau^0\}}e^{-\delta\tau}\mathbb{E}_{R^r_{\tau}}[\eta^{\nu}] \right]\\
&=\mathbb{E}_r\left[1_{\{\tau<\tau^0\}}e^{-\delta\tau}J(R^r_{\tau},\bar{\nu})\right],
\end{align*}
where $\bar{\nu}$ has the intervention times of $\nu$ subtracted by $\tau$. Hence, for an $\epsilon-$optimal strategy with initial level $R^r_{\tau}$ we have that
$$V(r)\geq\mathbb{E}_r\left[\int_0^{\tau\wedge\tau^0}e^{-\delta s}G(R^r_s)ds+1_{\{\tau<\tau^0\}}e^{-\delta\tau}(V(R^r_{\tau})-\epsilon)\right].$$
Since $\epsilon$ is arbitrary we obtain \eqref{dynamic}. If it is optimal not to intervene before $\tau$, then any $\epsilon-$optimal strategy belongs to $\mathcal{I}_{\tau}$ and we obtain the reverse inequality.
\end{proof}

\bibliography{referencias}
\bibliographystyle{abbrv}

\end{document}